\documentclass[a4paper,12pt]{amsart}
\usepackage[v2,all]{xy}
\usepackage[margin=1in]{geometry}
\usepackage{youngtab,enumitem}
\usepackage[colorlinks=true]{hyperref}
\newcommand{\comment}[1]{
}
\newcommand{\shortcomment}[1]{
}

\newtheorem{thm}{Theorem}[section]
\newtheorem{prop}[thm]{Proposition}
\newtheorem{lem}[thm]{Lemma}
\newtheorem{cor}[thm]{Corollary}
\newtheorem{defn}[thm]{Definition}

\theoremstyle{definition}
\newtheorem{eg}[thm]{Example}
\newtheorem{rem}[thm]{Remark}

\newcommand{\C}{\mathbb{C}}
\newcommand{\La}{\langle}
\newcommand{\Ra}{\rangle}

\newcommand{\mf}[1]{\mathfrak{#1}}

\begin{document}

\title{The Berkovits Complex and Semi-free Extensions of Koszul Algebras}
\author[Galvez]
{Imma G\'alvez}
\address{Departament de Matem\`atica Aplicada III\\ Universitat Polit\`ecnica de Catalunya\\ Escola
d'Enginyeria de Terrassa\\ Carrer Colom 1\\ 08222 Terrassa (Barcelona), Spain
}
\email{m.immaculada.galvez@upc.edu}

\author[Gorbounov]{Vassily Gorbounov}
\address{Institute of Mathematics\\
University of Aberdeen\\
Fraser Noble Building, King's College, Aberdeen AB24 3UE, United Kingdom}
\email{vgorb@maths.abdn.ac.uk}

\author[Shaikh]{Zain Shaikh}

\address{Fakult\"at f\"ur Elektrotechnik, Informatik und Mathematik\\
Institut f\"ur Mathematik\\
Warburger Str. 100, 33098 Paderborn}
\email{zain@math.upb.de}

\author[Tonks]
{Andrew Tonks}
\address{Department of Mathematics\\University of Leicester\\University Road\\Leicester LE1 7RH\\United Kingdom}
\email{apt12@leicester.ac.uk }

\maketitle
\begin{abstract}
  In his extension \cite{Ber} of W. Siegel's ideas on string
  quantization, N. Berkovits made several observations
  which deserve further study and development.
  Indeed, interesting accounts of this work have already appeared in the
  mathematical literature~\cite{GKhR,MSh} and in a different guise due to Avramov. In this paper we bridge between these three approaches, by providing a complex that is useful in the calculation of some homologies.
\end{abstract}
\tableofcontents
\section{Introduction}

This paper began with an observation of the importance of Koszul
duality theory for commutative algebras in the work of
Berkovits~\cite{Ber} on string quantization and string/gauge theory
duality.  In Berkovits' paper the commutative algebra in question is
the projective coordinate algebra $S$ of the orthogonal Grassmannian $OG(5,10)$,
related to the spinor representation of the group $SO(10,{\mathbb
  C})$.  This is a commutative quadratic algebra, and for such algebras it is an easy consequence of
the definition that its Koszul dual is the universal enveloping
algebra of a graded Lie superalgebra $L=\bigoplus_{i\geq1}
L_i$.

In a very interesting paper Movshev--Schwarz \cite{MSh} observed that the algebra of syzygies of $S$ is isomorphic to the
cohomology $H^*(L_{\geq2},\C)$ of the graded Lie superalgebra
$L_{\geq2}=\bigoplus_{i\geq 2} L_i$.  
They proposed a further complex in an attempt to describe an off-shell formulation of $N=1$, $D=10$ Yang--Mills theory, 
and posed the question of calculating its homology.
Their construction is an iteration of the Koszul homology of a
sequence of elements of an algebra, first applied to the algebra $S$
and then applied to its syzygies.  
Namely,
if $A$ is
finitely generated and presented as ${\mathbb C}[a_1,\dots,a_n]/I$, the
Koszul homology with respect to the sequence $\{a_i\}$ is called the
algebra of syzygies of $A$. The generators $\Gamma_j$ of the ideal $I$
represent syzygies in the lowest degree.  The Koszul homology of the
algebra of syzygies with respect to the sequence $\{\Gamma_j\}$ is the
algebra studied by Movshev--Schwarz.
 
The same construction has appeared in the study of {\em deviations} of a local ring (see \cite{Avr} and section 2.3 below). These are a sequence of integers that are attached to a local ring, that measure how far it is from being regular or a complete intersection. In calculating them, Avramov constructs complexes that are analogous to Movshev--Schwarz.  
A consequence of Avramov's work is that if $A$ is Koszul, the Berkovits homology
can be described explicitly in terms of the graded Lie superalgebra $L$.
Namely, 
for an arbitrary finitely generated commutative Koszul algebra, the Berkovits homology 
is isomorphic to the cohomology $H^*(L_{\geq 3},\C)$.
This result was also shown by Movshev--Schwarz for the case $A=S$. Further, they suggested an algebraic technique to study the Berkovits homology of the algebra $S$ by constructing an equivalent but ``smaller'' complex. This complex does not seem to appear as an accident; we believe that it is naturally attached to a commutative Koszul algebra. We construct such a ``smaller'' complex for a Koszul algebra attached to the Pl\"ucker embedding of the Grassmannian $G(2,5)$. 

At this point we would like to mention a connection with a conjecture made by Avramov:
Conjecture C$_{10}$ of \cite{Av} states that if $A$ is not a complete intersection, 
then the appropriate Lie superalgebra $L$ contains a free nonabelian graded Lie subalgebra.
The algebra $S$ above provides an example confirming the conjecture, according to \cite{HS,MSh}.
Indeed, the ``smaller'' complex allows one to calculate that $H^2(L_{\geq3},\C)$ is trivial. 
Our construction of a ``smaller'' complex allows us to show that the Lie superalgebra $L_{\geq3}$ is free for the projective coordinate algebra of $G(2,5)$, providing another confirmation of C$_{10}$.



The algebra of syzygies of $S$ is in fact non-quadratic, and yet the ``smaller'' complex still exists. So, we believe that our construction can be extended to a more general case, but requires (to our knowledge) a substantial extension of the notion of Koszulness to non-quadratic algebras.

The paper is organised as follows: in Section \ref{sec:prelim} we discuss some preliminaries including Koszul algebras and Lie superalgebras; in Section \ref{sec:ber} we define the Berkovits complex and how it arises as a semi-free extension of the initial algebra. The main theorem of the paper is Theorem \ref{thm:syz}, which shows that for any commutative Koszul algebra, its Berkovits homology is isomorphic to $H^*(L_{\geq3},\C)$. It is in Section \ref{sec:bv} where we construct the ``small'' complex and prove that it calculates the Berkovits homology when we have a Koszul algebra attached to the Pl\"ucker embedding of the Grassmannian $G(2,5)$ (Corollary \ref{cor:bv}).

The first author was partially supported by Spanish Government grants
MTM2010-15831, 
MTM2010-20692, 
MTM2012-38122-C03-01 and 
MTM2013-42178-P 
and Catalan Government grants 
SGR1092-2009 and
SGR634-2014,
and the fourth author by 
MTM2010-15831 and MTM2013-42178-P.
All the authors visited the Max Planck Institute while working on this
project and are grateful to the Institute for excellent working
conditions. 
The authors also thank to L.A.\ Bokut, A.\ Losev and M. Movshev for very useful discussions and advice, 
and A.\ Conca and S.\ Iyengar for pointing us to Avramov's commutative algebra constructions in \cite{Avr}. We are also grateful to the referee for pointing out an error in \cite[Theorem 4.4.1]{GKhR}, which we initially used to construct the smaller complex for a larger class of algebras.
Of course we also are very grateful to Vadim Schechtman with whom this project originally was
started and to whom we dedicate it with our best wishes.

\section{Preliminaries on commutative Koszul algebras}\label{sec:prelim}

We fix our ground field $k=\C$. 
Unless otherwise stated, all algebras are graded $A=\bigoplus_{i\ge0}A_i$ and locally finite dimensional, i.e. ${\rm dim}\;A_i<\infty$ for all $i\geq0$. Further, we assume that $A_0=\C$, so that $A=\C\oplus A_+$ is augmented, with augmentation ideal $A_+=\bigoplus_{i>0}A_i$.

\subsection{Koszul Algebras}\label{sec:kosalg}

\begin{defn}
Let $A$ be a quadratic algebra defined by a presentation
$A=T(V)/Q$, where $V$ is a finite dimensional vector space of generators in degree one, and $Q$ is a two-sided ideal generated by quadratic elements.
  The \emph{Koszul dual algebra} $A^!$ is the graded algebra defined as \[A^!=T(V^*)/Q^\perp,\] where $V^*$ is concentrated in degree one and $Q^\perp\subset V^*
  \otimes V^*$ is the annihilator of $Q$. 
\end{defn}
\noindent Clearly $A^{!!}=A$.
\medskip

The \emph{Koszul complex} $(K(A),d_A)$ of a quadratic algebra $A$ is defined by the sequence of right $A\otimes A^!$-modules 
\[K_p(A):=A\otimes (A^!)^*_p\qquad\qquad\qquad\quad (p\geq0).\]
Here $A$ is a right $A$-module via multiplication and $(A^!)^*$ is a right $A^!$-module via 
$$(\varphi b)(c):=\varphi(bc), \qquad (\varphi\in (A^!)^*,\;b,c\in A^!).$$
The differential $d_A$ is the action of the identity $\mathrm{id}_V\in \mathrm{Hom}(V,V)= V \otimes V^* \subset A \otimes A^!$.  It has degree $-1$, and satisfies $d_A^2=0$ since $A$ is quadratic.

Recall that since $A$ is graded, the algebra ${\rm Ext}_A(\C,\C)$ is bigraded, and decomposes as
$$
{\rm Ext}_A^i(\C,\C)=\bigoplus_{j\geq 0} {\rm Ext}_A^{ij}(\C,\C),
$$
where $i$ is the cohomological grading and $j$ is the grading descending from $A$.
\begin{defn}\label{defn:koszul}
An augmented algebra $A$ is a \emph{Koszul algebra} if ${\rm Ext}_A^{ij}(\C,\C)=0$ whenever $i\neq j$.
\end{defn}

The following theorem gives some equivalent definitions of a Koszul algebra. A proof may be found, for example, in \cite[Chapter 2, Theorem 4.1]{PP}.
\begin{thm}
Let $A$ be a quadratic algebra. Then the following are equivalent:
\begin{enumerate}
\item $A$ is Koszul,
\item $A^!$ is Koszul,
\item $\mathrm{Ext}_A(\C,\C)\cong A^!$ as algebras with ${\rm Ext}_A^i(\C,\C)=A^!_i$,
\item $(K(A),d_A)$ is a resolution of $\C$.
\end{enumerate}
\end{thm}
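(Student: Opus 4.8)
The plan is to organise the four conditions around the internal grading of $\mathrm{Ext}_A(\C,\C)$, proving the cycle $(4)\Rightarrow(3)\Rightarrow(1)\Rightarrow(4)$ and then $(1)\Leftrightarrow(2)$ separately. Two preliminary facts, valid for \emph{any} quadratic algebra, drive everything. First, a minimal graded free resolution $P_\bullet\to\C$ has differentials with entries in $A_+$, so the minimal internal degree occurring in $P_i$ grows by at least one at each step; hence $P_i$ is generated in internal degrees $\ge i$ and, dually, $\mathrm{Ext}_A^{ij}(\C,\C)=0$ for $j<i$. Thus Koszulness is exactly the statement that $\mathrm{Ext}_A(\C,\C)$ is supported on the diagonal $i=j$. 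Second, and this is the computational linchpin, the diagonal is always $A^!$: computing the reduced bar complex in internal degree equal to homological degree reduces the $p$-th term to $V^{\otimes p}$, and the only surviving homology is the kernel of the induced map into the first off-diagonal, namely $\bigcap_{i} V^{\otimes(i-1)}\otimes Q\otimes V^{\otimes(p-i-1)}$. This intersection is dual to the defining quotient of $(A^!)_p$, giving a natural isomorphism $\mathrm{Ext}_A^{pp}(\C,\C)\cong(A^!)_p$ compatible with products.

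With these in hand, $(4)\Rightarrow(3)$ is clean: if $K(A)$ is a resolution then, since its differential $d_A$ has entries in $A_+$, it is a \emph{minimal} free resolution, so $\mathrm{Ext}_A^i(\C,\C)=H^i(\mathrm{Hom}_A(K(A),\C))$ is computed by a complex with zero differential, yielding $\mathrm{Ext}_A^i(\C,\C)\cong(A^!)_i$; a direct check identifies the Yoneda product with the product of $A^!$. Then $(3)\Rightarrow(1)$ is immediate, because $(A^!)_i$ sits in internal degree $i$, forcing $\mathrm{Ext}_A^{ij}(\C,\C)=0$ for $j\neq i$.

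The substantial implication is $(1)\Rightarrow(4)$. Here I would compare $K(A)$ with the minimal resolution $P_\bullet$. Koszulness together with the linchpin gives $\mathrm{Tor}^A_p(\C,\C)=\mathrm{Tor}^A_{p,p}(\C,\C)\cong(A^!)^*_p$, so $P_p\cong A\otimes(A^!)^*_p=K_p$ as graded free modules, matching ranks in every internal degree. Lifting $\mathrm{id}_\C$ to a chain map $P_\bullet\to K(A)$ and reducing modulo $A_+$ produces maps $(A^!)^*_p\to(A^!)^*_p$; showing these are isomorphisms lets graded Nakayama promote the comparison map to an isomorphism of complexes $P_\bullet\cong K(A)$, whence $K(A)$ is a resolution. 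Verifying that the reduced comparison maps are isomorphisms — equivalently, that $K(A)$ realises the minimal resolution and not merely a complex of the right ranks — is the main obstacle, and is where the Koszul condition is genuinely consumed.

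Finally, for $(1)\Leftrightarrow(2)$ I would pass to Backelin's distributivity criterion: $A$ is Koszul if and only if, for every $n$, the subspaces $R_i=V^{\otimes(i-1)}\otimes Q\otimes V^{\otimes(n-i-1)}$ generate a distributive lattice inside $V^{\otimes n}$. Establishing this criterion is itself work — it amounts to recasting acyclicity of $K(A)$ as a lattice condition — but once available the equivalence with $(2)$ is transparent: passing to orthogonal complements sends a distributive lattice to a distributive lattice, and the complements $R_i^\perp$ are exactly the defining subspaces for $A^!$. Hence the criterion is manifestly self-dual under $A\leftrightarrow A^!$, $V\leftrightarrow V^*$, $Q\leftrightarrow Q^\perp$, giving $(1)\Leftrightarrow(2)$. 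Since $A^{!!}=A$, this closes the circle of equivalences.
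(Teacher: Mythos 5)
Your preliminary facts and the chain $(4)\Rightarrow(3)\Rightarrow(1)$ are essentially sound (the identification of the diagonal of $\mathrm{Ext}_A(\C,\C)$ with $A^!$, products included, is the standard bar-complex computation). But the proposal has two genuine gaps, and they sit exactly at the two hard implications. First, in $(1)\Rightarrow(4)$ you set up the comparison with a minimal resolution $P_\bullet$ and then explicitly leave open the verification that the reduced comparison maps $(A^!)^*_p\to\mathrm{Tor}^A_p(\C,\C)$ are isomorphisms, calling it ``the main obstacle''; that obstacle \emph{is} the implication, so nothing has been proved. Moreover, the comparison map cannot be constructed in the direction you state: a lift of $\mathrm{id}_\C$ to a chain map $P_\bullet\to K(A)$ requires the \emph{target} to be acyclic, which is precisely what is in question; the constructible map goes $K(A)\to P_\bullet$. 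The repair along your own lines is to use the bar resolution instead of an abstract $P_\bullet$: the inclusions $(A^!)^*_p=\bigcap_i V^{\otimes(i-1)}\otimes Q\otimes V^{\otimes(p-i-1)}\subset A_+^{\otimes p}$ assemble into an explicit chain embedding $\iota\colon K(A)\hookrightarrow B(A)=A\otimes A_+^{\otimes\bullet}$ (the merge terms of the bar differential vanish on $Q$); after applying $\C\otimes_A(-)$, the map induced by $\iota$ on homology is exactly your linchpin map onto the diagonal $\mathrm{Tor}_{p,p}$, which Koszulness promotes to an isomorphism onto all of $\mathrm{Tor}_p$; then the cone of $\iota$ is a bounded-below complex of graded free modules that becomes acyclic after $\C\otimes_A(-)$, hence is acyclic by graded Nakayama (induction on internal degree), so $K(A)$ is a resolution.

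Second, $(1)\Leftrightarrow(2)$ is outsourced to Backelin's distributivity criterion, which you concede is unproven ``work''. This is not a side lemma: it is a theorem whose proof constitutes the core of the treatment in \cite{PP} --- and note that the paper itself offers no proof of the present theorem at all, citing exactly \cite[Chapter 2, Theorem 4.1]{PP}, where that lattice machinery is the engine. Quoting the criterion therefore assumes something at least as strong as what is being proved. A self-contained alternative that fits your framework: once $(1)\Leftrightarrow(4)$ is fully established (for an arbitrary quadratic algebra, hence also for $A^!$), observe that in each internal degree $n$ the Koszul complex of $A^!$ is the vector-space dual of that of $A$ read backwards --- the degree-$n$ part of $K(A)$ is the finite complex $0\to(A^!)^*_n\to A_1\otimes(A^!)^*_{n-1}\to\cdots\to A_n\to0$, whose linear dual is the degree-$n$ part of $K(A^!)$ --- and a finite complex of finite-dimensional spaces is exact if and only if its dual is. Hence $K(A)$ is a resolution iff $K(A^!)$ is, giving $(1)\Leftrightarrow(2)$ with no lattice theory and closing the circle via $A^{!!}=A$.
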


\begin{eg}\label{eg:SV}
Consider the symmetric algebra, $S(V)$. It is a quadratic algebra and the ideal of relations $Q$ inside $T(V)$ is precisely $(u\otimes v-v\otimes u \;|\; u,v\in V)$. The annihilator of $Q$ inside $T(V^*)$ is generated by tensors of the form $u^*\otimes v^*+v^*\otimes u^*$ and so $S(V)^!=\bigwedge V^*$.
Therefore the Koszul complex of $S(V)$ has graded components 
\begin{equation}
K_p(S(V))=S(V)\otimes {\bigwedge}^{\!p\;} W_1\qquad\qquad\qquad\quad (p\geq0)
\end{equation}
where $W_1$ is a copy of $V$ inside $(S(V)^!)^*$. 
If $a_1,\dots,a_n$ and $\xi_1,\dots,\xi_n$ are bases of $V$ and $W_1$ respectively, then the differential can be written as
$$
\sum_ia_i\frac{\partial}{\partial \xi_i}.
$$
This makes $K(S(V))$ into a DG algebra over $S(V)$, with the obvious product structure.
It is a resolution of $\C$ (see \cite[Proposition VII.2.1]{MacLH} for a proof), so $S(V)$ and  $S(V)^!=\bigwedge V^*$ are Koszul algebras.
\end{eg}

\subsection{Lie Superalgebras}

\begin{defn}
  A \emph{Lie superalgebra} over $\C$ is a $\mathbb{Z}_2$-graded vector
  space $L=L_{\bar{0}}\oplus L_{\bar{1}}$ with a map
  $[\cdot,\cdot]:L\otimes L\to L$ of $\mathbb{Z}_2$-graded spaces, satisfying:
  \begin{enumerate}
  \item (anti-symmetry) $[x,y]=-(-1)^{|x||y|}[y,x]$ for all
    homogeneous $x,y\in L$,
  \item (Jacobi identity)
    $(-1)^{|x||z|}[x,[y,z]]+(-1)^{|y||x|}[y,[z,x]]+(-1)^{|z||y|}[z,[x,y]]=0$
    for all homogeneous $x,y,z\in L$.
  \end{enumerate}
  Here  $|x|=i$ when $x\in L_{\bar{i}}$ for $i=0,1$, and
  an element $x$ in $L_{\bar{0}}$ or $L_{\bar{1}}$ is termed even or odd respectively.
We recover the familiar definition of a Lie algebra (over $\C$) in the case $L=L_{\bar{0}}$.

  A \emph{graded Lie superalgebra} is a Lie superalgebra $L$ together with
  a grading compatible with the bracket and supergrading. That is,
  $L=\bigoplus_{m\geq 1} L_m$ such that $[L_i,L_j]\subset L_{i+j}$
  and $L_{\bar{i}}=\bigoplus_{m\geq 1} L_{2m-i}$ for $i=0,1$.
\end{defn}
\begin{rem}
  In the literature a graded Lie superalgebra is sometimes called simply
  a graded Lie algebra. However, this could also refer to an ordinary Lie algebra $L=L_{\bar{0}}$ with a grading compatible with the bracket.
In
  order to avoid this ambiguity we prefer the terminology as in the
  definition.
\end{rem}

Let $V$ be a vector space with basis $a_1,\dots,a_n$ and consider a quadratic commutative algebra $A=T(V)/Q$. Then, $Q=C\oplus I$ where $C$ is the ideal $( u\otimes v - v\otimes u\;|\; u,v\in V)$ and $I=(\Gamma_1,\dots,\Gamma_m)\subset S^2(V)$ so that we can write $A=S(V)/I$. Further, $Q \supseteq C$ implies that $Q^\perp \subseteq C^\perp=S^2(V^*)$ and so $Q^\perp$ is generated by certain linear
combinations of anti-commutators $[a^*_i,a^*_j]=a^*_i a^*_j+
a^*_ja^*_i$. We can therefore describe the Koszul dual as the
universal envelope of a graded Lie
superalgebra,
\begin{equation}\label{defn:L}
  A^!=U(L),\qquad\qquad L=\bigoplus_{m\geq 1}L_m=\mathbb{L}(V^*)/J,
\end{equation}
where $\mathbb{L}$ is the free Lie superalgebra functor, the space of (odd) generators $V^*$ is concentrated in degree 1, and $J$ is the Lie ideal
with the same generators as $Q^\perp$ but viewed as linear
combinations of supercommutators.

\begin{defn}
  For $k\ge 2$, we define the graded Lie superalgebras
$$L_{\geq k}=\bigoplus\limits_{m\geq k}L_m.$$
\end{defn}

The interpretation of the algebras $L_{\geq k}$ for $k>2$ was outlined
in \cite{Ber}. In this paper, we concentrate on the case $k=3$.


\subsection{Deviations}

Consider the Hilbert series of the commutative algebra $A=S(V)/I$
\begin{equation}\label{poincare}
  H_A(t):=\sum_{r=0}^\infty \dim(A_r)\,t^r=\frac{h(t)}{(1-t)^{n}},
\end{equation}
where 
$h(t)$ is a
polynomial such that $h(0)=1$. Gauss' cyclotomic identity
\cite{Gauss} allows us to expand this rational function into an
infinite product
\begin{equation}
  \frac{h(t)}{(1-t)^{n}}=\prod_{s=1}^{\infty}(1-t^s)^{(-1)^s\,\varepsilon_s(A)},\label{gauss}
\end{equation}
where the exponents $\varepsilon_s(A)$ are integers known as the \emph{deviations} of $A$.

When $A$ is a Koszul algebra, the exponents $\varepsilon_s(A)$ give the dimensions of the graded components $L_s$
of the Lie superalgebra $L$ (see for example~\cite{Kang}), and in particular we have $\varepsilon_1(A)=n$.
Indeed,
by the Poincar\'e--Birkhoff--Witt theorem for the universal enveloping algebra of a graded Lie superalgebra,
the Hilbert series  of $A^!=U(L)$
is equal to
$$
H_{A^!}(t)
=\prod_{s=1}^{\infty}(1-(-t)^s)^{(-1)^{s-1}\dim(L_s)}
$$
and from the relation  $H_{A}(t)\,H_{A^!}(-t)=1$ it follows that
$\varepsilon_s(A)=\dim(L_s)$ in \eqref{gauss}.
See \cite[Section 2.2]{PP} for further details, and also Remark~\ref{eulerpoincare} below.

Dividing by \[\prod_{s=1}^{k-1}(1-t^s)^{(-1)^s\varepsilon_s(A)}\] we obtain the
Hilbert series of $L_{\geq k}$ as the following infinite product
$$\prod_{s=k}^{\infty}(1-t^s)^{(-1)^s\varepsilon_s(A)}.$$

\section{Koszul homology and Berkovits complex}

\subsection{Koszul homology and the algebra of syzygies}

Let $A=S(V)/I$ be a commutative quadratic algebra with $\{a_1,\dots,a_n\}$ a basis of $V$. 
 
\begin{defn}\label{defn:koshom}
  The \emph{Koszul homology} of $A$ with respect to a sequence of elements
  $x_1,\dots,x_k$ in $A$ is the homology of the
  complex 
  \[
  A[W_1] := \left(A\otimes\bigwedge W_1,\:
  \sum_{i=1}^k x_i\frac{\partial}{\partial\theta_i}\right)
  \] 
  where $A$ has homological degree zero and 
  $W_1$ is the vector space spanned by elements $\theta_1,\dots,\theta_k$ in homological degree one.

The \emph{algebra of syzygies} of $A$ is the Koszul homology of $A$ with respect to the sequence $\{a_1,\dots,a_n\}$.
 \end{defn}
 
The algebra of syzygies of $A$ is graded finite dimensional by
Hilbert's syzygy theorem (c.f. \cite[Theorem 1.1]{Eis}).

\begin{rem}\label{rem:syzygy} We make the following trivial remarks:

\begin{enumerate}
\item Suppose $A$ is the symmetric algebra $S(V)$. Then $A^!=\bigwedge V^*$ and the complex $A[W_1]$ calculating the algebra of syzygies
of $A$ is precisely the Koszul complex, compare Example \ref{eg:SV}.
\item The complex $A[W_1]$ is a DG algebra. The algebra structure is given by
$$
(a \otimes \omega) \cdot (b \otimes \eta) = ab\otimes \omega \wedge \eta,
$$
where $a,b \in A$ and $\omega,\eta\in \bigwedge W_1$. By a theorem of Kadeishvili \cite{Kad}, the Koszul homology, and in particular the algebra of syzygies, inherits an $A_\infty$-algebra structure, which is unique up to (non-unique) isomorphism. We call this the $A_\infty$-\emph{minimal model} for $A[W_1]$ (the term minimal model will be reserved for something else).
\item When $A$ is graded, the Koszul homology is bigraded. In the case of Koszul homology with respect to a sequence of generators, the homological grading will be called the \textit{order} of the syzygies and the sum of the homological grading and the grading on $A$ will be called the \textit{degree} of the syzygies.
\end{enumerate}
\end{rem}

We observe that ${\rm Tor}^{S(V)}(A,\C)$ is also calculated by the complex $A[W_1]$, and so coincides with the algebra of syzygies. Indeed, $K(S(V))$ is a resolution of $\C$ by $S(V)$-modules, so ${\rm Tor}^{S(V)}(A,\C)$ is the homology of the complex
\begin{eqnarray}
  A\otimes_{S(V)}K(S(V))=A\otimes_{S(V)} S(V)\otimes_{\C}\bigwedge W_1=A\otimes_{\C}\bigwedge W_1
\end{eqnarray}

Alternatively, suppose we have a minimal free resolution of $A$,
\[\xymatrix{\cdots \rto& F_2 \rto& F_1 \rto& F_0 \rto& A \rto& 0,} \]
by graded free $S(V)$-modules
\[F_p=\bigoplus\limits_{q
}R_{p,q}\otimes S(V).\]
Here minimality means that the differential vanishes on tensoring this complex with
the trivial $S(V)$-module $\C$
, and
hence
\[\mathrm{Tor}^{S(V)}_p(A,\C)\:\:=\:\:R_p:=\:\bigoplus\limits_{q
}R_{p,q}.\]
Thus
$R_{p,q}$ is the finite dimensional vector space of $p$-th
  order syzygies of degree $q$ for $A$
.  

\begin{eg} \label{eg:G25}
Consider the Pl\"ucker embedding of the Grassmannian $G(2,5)$,
\begin{eqnarray*}
G(2,5) &\longrightarrow& \mathbb{P}\left({\bigwedge}^{\!2}\; \C^5\right)\\
\langle v_1,v_2 \rangle_{\C} &\longmapsto& [v_1\wedge v_2].
\end{eqnarray*}



Let $\{e_1,\dots,e_5\}$ be a basis of $\C^5$, then $G(2,5)$ can be written as the intersection of           five 
quadrics $\Gamma_i=0$  where
\begin{eqnarray*}
\Gamma_1&= & -e_{24}e_{35}+e_{23}e_{45}+e_{25}e_{34} \\
\Gamma_2&= &- e_{14}e_{35}+e_{13}e_{45}+e_{15}e_{34} \\
\Gamma_3&= & -e_{14}e_{25}+e_{12}e_{45}+e_{15}e_{24} \\
\Gamma_4&= & -e_{13}e_{25}+e_{12}e_{35}+e_{15}e_{23} \\
\Gamma_5&= & -e_{13}e_{24}+e_{12}e_{34}+e_{14}e_{23}.
\end{eqnarray*}
Here $e_{12}$ is the coordinate function dual to $e_1\wedge e_2$, etc.
The projective coordinate algebra of $G(2,5)$  
is 
$
A=S(V)/I
$
where $V$ is the vector space with basis  $\{e_{ij}\}$ and $I=(\Gamma_1,\dots,\Gamma_5)$.

From \cite[Example 2.3.3]{GKhR} we know the dimensions of the syzygies and we can explicitly construct a minimal free resolution by $S(V)$ modules,
$$
\xymatrix@R=0pt{0\rto&F_3 \rto& F_2 \rto& F_1\rto& F_0\rto& A \rto&0.\\
&\langle c^*\rangle& 
\langle\tilde\Lambda_1,\dots,\tilde\Lambda_5\rangle& 
\langle\tilde\Gamma_1,\dots,\tilde\Gamma_5\rangle& 
\langle c\rangle &S(V)/I }
$$

In particular, $F_0=R_{0,0}\otimes S(V)\cong S(V)$
and the zeroth order syzygy $R
_{0,0}$ is given by a copy of $\C$, generated by $c$.
For exactness at $F_0$ we take $F_1=R_{1,2}\otimes S(V)$ where $R_{1,2}$ is spanned by $\tilde\Gamma_1,\dots,\tilde\Gamma_5$ and 
$\partial \tilde\Gamma_i=\Gamma_i\cdot c$.
The kernel of this boundary map is spanned by
\begin{eqnarray*}\begin{split}
\Lambda_1\;=\;\;&& -e_{12}\tilde\Gamma_2 & +e_{13}\tilde\Gamma_3 & -e_{14}\tilde\Gamma_4 & +e_{15}\tilde\Gamma_5 \\
\Lambda_2\;=\;\;&e_{12}\tilde\Gamma_1 & & -e_{23}\tilde\Gamma_3 & +e_{24}\tilde\Gamma_4 & -e_{25}\tilde\Gamma_5  \\
\Lambda_3\;=\;\;&-e_{13}\tilde\Gamma_1 & +e_{23}\tilde\Gamma_2 &  & -e_{34}\tilde\Gamma_4 & +e_{35}\tilde\Gamma_5\\
\Lambda_4\;=\;\;&e_{14}\tilde\Gamma_1 & -e_{24}\tilde\Gamma_2 &  +e_{34}\tilde\Gamma_3 &  & -e_{45}\tilde\Gamma_5\\
\Lambda_5\;=\;\;&-e_{15}\tilde\Gamma_1 & +e_{25}\tilde\Gamma_2 & -e_{35}\tilde\Gamma_3 & +e_{45}\tilde\Gamma_4 &&\\
\end{split}\end{eqnarray*}
so for exactness at $F_1$ we take  $F_2=R_{2,3}\otimes S(V)$ where $R_{2,3}$ is spanned by $\tilde\Lambda_1,\dots,\tilde\Lambda_5$ and
\begin{eqnarray*}
\partial\tilde\Lambda_1= (0,-e_{12},e_{13},-e_{14},e_{15})\\
\partial\tilde\Lambda_2= (e_{12},0,-e_{23},e_{24},-e_{25})\\
\partial\tilde\Lambda_3= (-e_{13},e_{23},0,-e_{34},e_{35})\\
\partial\tilde\Lambda_4= (e_{14},-e_{24},e_{34},0,-e_{45})\\
\partial\tilde\Lambda_5= (-e_{15},e_{25},-e_{35},e_{45},0)
\end{eqnarray*}
The third and highest order syzygy $R
_{3,5}$ has a single generator $c^*$ with \[\partial c^*=\sum_{i=1}^5\Gamma_i\,\tilde\Lambda_i.\]

We return to this example in Example \ref{eg:G25alg}, where we write down the algebra structure on these syzygies.

\end{eg}

\subsection{The Berkovits complex}\label{sec:ber}

Let $A$ be a commutative quadratic algebra given by $S(V)/I$, where $V$ is a vector space with basis $a_1,\dots,a_n$, and let $W_1$ be the vector space concentrated in degree 1 with basis 
$\theta_1,\dots,\theta_n$.


\begin{lem}\label{lem:syzlowest}
  Suppose the quadratic ideal $I$ is spanned by
$$\Gamma_k=\sum_{i,j=1}^n \Gamma_{ij}^k a_ia_j,\qquad\qquad(1\leq k\leq m).$$
Then the first order syzygies are given by the following homology classes in $A[W_1]$,
$$\tilde \Gamma_k=\sum_{i,j=1}^n \Gamma_{ij}^k a_i\theta_j,\qquad\qquad(1\leq k\leq m).$$
\end{lem}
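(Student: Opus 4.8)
The plan is to analyse the two lowest pieces of the complex $A[W_1]$ from Definition~\ref{defn:koshom} and to identify the order-one homology in the lowest internal degree with the degree-two part $I_2=\mathrm{span}\{\Gamma_k\}$ of the ideal. Write the differential as $d=\sum_{l=1}^n a_l\,\frac{\partial}{\partial\theta_l}$. In homological degree one the chains are $A\otimes W_1$, and $d\big(\sum_j b_j\theta_j\big)=\sum_j b_j a_j\in A$; in homological degree two the chains are $A\otimes\bigwedge^2 W_1$, on which $d(\theta_i\wedge\theta_j)=a_i\theta_j-a_j\theta_i$. Recall from Remark~\ref{rem:syzygy} that the order of a syzygy is its homological degree and its degree is the homological degree plus the $A$-grading. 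In degree one, $A_0\otimes W_1=\mathrm{span}\{\theta_j\}$ contains no cycles, since $d\theta_j=a_j\neq0$; hence the lowest-degree order-one syzygies occur in degree two, inside $A_1\otimes W_1=\mathrm{span}\{a_i\theta_j\}$.

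First I would check that each $\tilde\Gamma_k$ is a cycle. Applying the differential gives $d\tilde\Gamma_k=\sum_{i,j}\Gamma^k_{ij}\,a_i a_j=\Gamma_k$, computed in $A$. The whole point is that $\Gamma_k\in I$, so $\Gamma_k=0$ in $A=S(V)/I$ and therefore $d\tilde\Gamma_k=0$. Thus each $\tilde\Gamma_k$ represents a homology class in degree two.

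It then remains to show that these classes exhaust the order-one syzygies of lowest degree and correspond bijectively to $I_2$. A general degree-two chain in $A\otimes W_1$ is $\sum_{i,j}c_{ij}\,a_i\theta_j$ for a matrix of scalars $(c_{ij})$, and it is a cycle precisely when $\sum_{i,j}c_{ij}\,a_i a_j\in I_2$ inside $S^2(V)$. Since $a_i a_j=a_j a_i$ in $S(V)$, this condition depends only on the symmetric part of $c$, while the antisymmetric part contributes nothing to the image in $S^2(V)$. On the other hand the boundaries from $A_0\otimes\bigwedge^2 W_1=\bigwedge^2 W_1$ are exactly the antisymmetric combinations $a_i\theta_j-a_j\theta_i$. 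Symmetrising the coefficient matrix therefore yields a well-defined isomorphism from $H_1(A[W_1])$ in degree two onto the space of symmetric matrices $c$ with $\sum_{i,j}c_{ij}\,a_i a_j\in I_2$, which in turn maps isomorphically to $I_2$ via $c\mapsto\sum_{i,j}c_{ij}\,a_ia_j$, carrying $[\tilde\Gamma_k]$ to $\Gamma_k$.

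The main obstacle — indeed essentially the only subtle point — is this symmetric/antisymmetric bookkeeping: one must verify that the boundaries coming from $\bigwedge^2 W_1$ account for precisely the freedom in the antisymmetric part of the coefficients, so that the class $[\tilde\Gamma_k]$ is independent of the choice of coefficients $\Gamma^k_{ij}$ representing $\Gamma_k$, and that distinct relations yield independent classes. Once this is established, taking $\{\Gamma_k\}$ to be a basis of $I_2$ makes $\{[\tilde\Gamma_k]\}$ a basis of the lowest-degree order-one syzygies, which is the assertion of the lemma.
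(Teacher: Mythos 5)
Your computation is correct as far as it goes, and since the paper states this lemma with no proof at all (it is treated as evident and immediately used to define the Berkovits complex), your direct low-degree analysis of $A[W_1]$ is exactly the natural argument: $d\tilde\Gamma_k=\Gamma_k=0$ in $A$; the boundaries landing in bidegree (order $1$, degree $2$) are precisely the antisymmetric combinations $d(\theta_i\wedge\theta_j)=a_i\theta_j-a_j\theta_i$; and the symmetric/antisymmetric splitting of the coefficient matrix identifies the degree-two part of $H_1(A[W_1])$ with $I_2$, carrying $[\tilde\Gamma_k]$ to $\Gamma_k$. All of these steps check out.

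The one genuine gap is that the lemma asserts the classes $[\tilde\Gamma_k]$ give \emph{all} the first order syzygies, whereas your final sentence restricts the conclusion to the \emph{lowest-degree} order-one syzygies: you never rule out first-order homology in internal degree $j>2$. This is where the hypothesis that $I$ is a quadratic ideal (i.e.\ generated by its degree-two part) must be used beyond what you invoked. The quickest repair uses the paper's own observation (just after Remark~\ref{rem:syzygy}) that $H_1(A[W_1])\cong \mathrm{Tor}_1^{S(V)}(A,\C)$: from the short exact sequence $0\to I\to S(V)\to A\to 0$ and $\mathrm{Tor}_1^{S(V)}(S(V),\C)=0$ one gets $\mathrm{Tor}_1^{S(V)}(A,\C)\cong I\otimes_{S(V)}\C = I/S(V)_+I$, and since $I=S(V)\cdot I_2$ this quotient is exactly $I_2$, concentrated in degree two. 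With that line added, your identification of the degree-two part becomes an identification of all of $H_1(A[W_1])$, which is what the statement (and its later use) requires: when the Berkovits complex is interpreted as the second step of the acyclic closure, one needs $\{\tilde\Gamma_k\}$ to minimally generate the whole of $\widetilde H_1(A[W_1])$, not only its degree-two piece.
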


This leads to the following definition.

\begin{defn}\label{defn2:ber}
  The \emph{Berkovits complex} of a commutative quadratic
  algebra $A$ is
$$A[W_1\oplus W_2]:=\left(A\otimes\bigwedge W_1 \otimes
  S(W_2),\;
\sum_{i=1}^n
  a_i\frac{\partial}{\partial\theta_i}+\sum_{k=1}^m
  \tilde\Gamma_k\frac{\partial}{\partial y_k}\right),$$
where $W_2$ is the vector space spanned by elements $y_1,\dots,y_m$ in homological degree two.

The \emph{Berkovits homology} of $A$ is the homology of this complex.
\end{defn}

Definitions \ref{defn:koshom} and \ref{defn2:ber}, which calculate Koszul and Berkovits homology respectively, are examples of \emph{semi-free extensions} of commutative DG algebras, or \emph{relative Sullivan algebras} (see for example 
\cite{Avr,hess-hrht,Qui,Sul}):

\begin{defn}
A \emph{semi-free extension} of $A$ is an inclusion of commutative DG algebras $$A\to A\otimes S_\bullet(W)$$  for some positively graded vector space $W$. We will write $A[W]$ for $A\otimes S_\bullet(W)$.
\end{defn}
By $S_\bullet$ we mean the symmetric algebra in the graded sense,
$$
S_\bullet(W)=S(W_{\text{even}})\otimes \bigwedge  W_{\text{odd}}.
$$
For instance Definition \ref{defn2:ber} would read $A[W_1\oplus W_2]= A \otimes S_\bullet(W_1\oplus W_2)$.

\subsection{Minimal models for commutative Koszul algebras} 
Assume that $A$ is a commutative Koszul algebra with
$A^!=U(L)$ as above. We construct a resolution of $A$ in the category of DG
algebras from the Chevalley-Eilenberg complex of $L$.

\begin{defn}\label{defn:chev}
  The Chevalley-Eilenberg complex of $L$ is the cochain complex with \[\mathrm{CE}^i(L)=\left({\bigwedge}^i L\right)^*,\] and the
  differential $d_C:\mathrm{CE}^k(L)\to\mathrm{CE}^{k+1}(L)$ is defined on
  $\varphi\in \left({\bigwedge}^k L\right)^*$ by
  \[(d_C\varphi)(x_0,\dots,x_{k}) = \sum_{i<j} (-1)^{j+\varepsilon(i,j)}
  \varphi(x_0,\dots,x_{i-1},[x_i,x_j],x_{i+1},\dots,\widehat{x_j},\dots,x_k),\]
  where each $x_r$ is homogeneous, $\widehat{x_j}$ means omit $x_j$ and
  $\varepsilon(i,j)=|x_j|(|x_{i+1}|+\dots+|x_{j-1}|)$.
\end{defn}
The Chevalley-Eilenberg complex is a cochain complex that calculates the cohomology of $L$ with trivial coefficients, $H^*(L,\C)$. Chevalley and Eilenberg in \cite{ChEil} introduced a {\em shuffle product} $\odot$ with respect to which the differential $d_C$ is a derivation and gives this complex 
an algebra structure that descends to the algebra structure on
the cohomology of $L$. 

For graded Lie superalgebras with finite dimensional graded components, one can prove that the shuffle product $\odot$ is
skew-supersymmetric and that there is an algebra
isomorphism \[\left(\bigwedge L^*,\wedge\right) \;\cong\;
\left(\left(\bigwedge L\right)^*,\odot\right).\]
Hence, the Chevalley-Eilenberg complex can be alternatively defined as
the exterior algebra on $L^*$, with differential $d_C$ the extension as a
derivation of the map dual to  $[\;,\,]:L\wedge L\to L$.

As well as being a cochain complex whose cohomology is that of $L$, the Chevalley-Eilenberg complex may also be considered a chain complex which defines a resolution of $A$. The chain complex is given by defining $L_p^*$ to have homological grading $p-1$, so that the homological and cohomological gradings together give the total degree in $\bigwedge L^*$.
We illustrate the Chevalley-Eilenberg complex as a chain complex with homological grading as follows:
\[\xymatrix@R=0.3pc@!C=6.8em{\cdots\;
\mathrm{CE}_4(L)\rto^-{d_C}& \mathrm{CE}_3(L)\rto^-{d_C}& \mathrm{CE}_2(L)\rto^-{d_C}&\mathrm{CE}_1 (L)\rto^-{d_C}&\mathrm{CE}_0(L)
\\
&&&0 \ar[r] & L_1^* 
\\
&&0 \ar[r] & L_2^* \ar[r] & \bigwedge^2 L_1^* 
\\
&0 \ar[r] & L_3^* \ar[r] & L_2^* \wedge L_1^* \ar[r] & \bigwedge^3 L_1^* 
\\
0 \ar[r] & L_4^* \ar[r] & L_3^* \wedge L_1^* \oplus L_2^* \wedge L_2^* \ar[r] & L_2^* \wedge L_1^* \wedge L_1^* \ar[r] \ar[r] & \bigwedge^4 L_1^* 
}\]
The original cohomological grading is seen on the diagonals:
$\mathrm{CE}^1(L)=L^*$ is the first non-zero diagonal and
$\mathrm{CE}^2(L)$ is the diagonal containing exterior products of two factors.

\begin{defn}
A \emph{minimal model} of a commutative algebra $A=S(V)/I$ is a semi-free extension $S(V)\to S(V)[W]$ such that

\begin{enumerate}
\item  the canonical quotient $S(V)[W]\to S(V)\to A$ is an isomorphism in homology, 
\item the differential is decomposable, $\partial 
W
\subseteq 
S(V)\otimes S_\bullet
^{\geq2} W
.$
\end{enumerate}
\end{defn}
This concept arose in rational homotopy theory, see for example \cite[Definition 1.10]{hess-hrht} or \cite[Section 7.2]{Avr}.

\begin{prop}\label{H0L}
  For a commutative Koszul algebra $A$ with $A^!=U(L)$, the Chevalley-Eilenberg
  complex of $L$ with homological grading is a minimal model of $A$.
\end{prop}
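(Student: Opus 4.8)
The plan is to check the three requirements in turn: that $\mathrm{CE}_\bullet(L)$ is genuinely a semi-free extension of $S(V)$, that its augmentation to $A$ is a quasi-isomorphism, and that $d_C$ is decomposable. I would begin with the semi-free structure. Since $L_1=V^*$ is odd, the corresponding cochain generators $L_1^*=V$ commute, so the homological-degree-zero piece $\mathrm{CE}_0(L)=\bigwedge^\bullet L_1^*$ is exactly the symmetric algebra $S(V)$, on which $d_C$ vanishes because there is nothing in homological degree $-1$. Putting $W=\bigoplus_{p\geq2}L_p^*$ with $L_p^*$ in homological degree $p-1$, the super-exterior algebra factors as $\bigwedge L^*=S(V)\otimes S_\bullet(W)$, and the inclusion $S(V)=\mathrm{CE}_0(L)\hookrightarrow\mathrm{CE}_\bullet(L)$ is a map of DG algebras. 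This exhibits $\mathrm{CE}_\bullet(L)=S(V)[W]$ as a semi-free extension.

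For condition (1) I would keep careful track of three gradings: the cohomological degree $i$ (the number of wedge factors), the internal degree $j$ (the weight inherited from $L$), and the homological degree, which by construction is $j-i$. Read as a cochain complex, $\mathrm{CE}^\bullet(L)$ computes $H^\bullet(L,\C)=\mathrm{Ext}^\bullet_{U(L)}(\C,\C)=\mathrm{Ext}^\bullet_{A^!}(\C,\C)$, compatibly with internal degree. Since $A$ is Koszul so is $A^!$, and the quoted theorem applied to $A^!$ gives $\mathrm{Ext}^{i,j}_{A^!}(\C,\C)=0$ unless $i=j$, with $\mathrm{Ext}^{i,i}_{A^!}(\C,\C)=(A^!)^!_i=A_i$. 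All cohomology therefore sits on the diagonal $j=i$, i.e. in homological degree $j-i=0$, so $H_0(\mathrm{CE}_\bullet(L))\cong\bigoplus_i A_i=A$ and $H_p=0$ for $p>0$. To see that this isomorphism is the canonical quotient, I would compute $\mathrm{im}\,(d_C\colon\mathrm{CE}_1(L)\to\mathrm{CE}_0(L))$: in internal degree $2$ the differential is dual to the degree-two bracket $\bigwedge^2 L_1\to L_2$, which identifies $L_2^*$ with the quadratic relations $I\subseteq S^2(V)$, and since $d_C$ is $S(V)$-linear on $\mathrm{CE}_1(L)=L_2^*\otimes S(V)$ its image is the ideal generated by $I$, giving $H_0=S(V)/I=A$ as required.

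Condition (2) is the most direct. For a generator $\varphi\in L_p^*=W_{p-1}$ with $p\geq2$, the coboundary $d_C\varphi$ is dual to the bracket and hence lies in $\bigoplus_{a+b=p,\;a,b\geq1}L_a^*\wedge L_b^*$. Because $L=\bigoplus_{m\geq1}L_m$ has no component in weight $0$, both factors in each term carry strictly positive internal degree; in particular $d_C\varphi$ contains no term that is a scalar multiple of a single generator, which is precisely the decomposability of the differential.

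The main obstacle I expect is the grading bookkeeping in condition (1): one must verify that the internal grading on $\mathrm{Ext}_{A^!}(\C,\C)$ matches the weight grading on the Chevalley--Eilenberg complex and apply the reindexing $\text{(homological degree)}=j-i$ consistently, so that Koszulness really collapses all homology onto homological degree zero. The identification $L_2^*\cong I$, used to recognise $H_0$ as the canonical quotient, is the one genuinely computational input; everything else is formal.
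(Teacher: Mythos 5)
Your proof is correct and takes essentially the same route as the paper's: identify $\mathrm{CE}_0(L)=\bigwedge L_1^*=S(V)$ with $W_k=L_{k+1}^*$, use Koszulness of $A$ (equivalently of $A^!=U(L)$) to place all of $\mathrm{Ext}_{U(L)}^{ij}(\C,\C)$ on the diagonal $i=j$, i.e.\ in homological degree zero, and read off decomposability of $d_C$ from the weight grading on $L$. Your extra step identifying $d_C(L_2^*)$ with the ideal $I$, so that $H_0$ is realised by the canonical quotient $S(V)[W]\to S(V)\to A$ rather than merely abstractly isomorphic to $A$, is a detail the paper leaves implicit and is a worthwhile addition.
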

\begin{proof}

We observe that $\mathrm{CE}_0(L)=\bigwedge L_1^*=S(V)$. If we take $W_k=L_{k+1}^*$ for $k\geq1$ then $\mathrm{CE}(L)$ is the semi-free extension $S(V)[W]$.

  As $L$ is graded, so is
  $\mathrm{Ext}_{U(L)}^*(\C,\C)$, and since $A$ is Koszul \[H^i(L,\C)_j \cong
  \mathrm{Ext}_{U(L)}^{ij}(\C,\C)
  =\left\{
\begin{array}{l l}
  A & \quad \mbox{if $i=j$}\\
  0 & \quad \mbox{if $i\neq j$}\\
\end{array} \right.,\] where $j$ is the total degree.
Using our homological grading of
  $\mathrm{CE}(L)$ we have therefore,
\[H_i(\mathrm{CE}(L))=\left\{
\begin{array}{l l}
  A & \quad \mbox{if $i=0$}\\
  0 & \quad \mbox{if $i\neq0$}\\
\end{array} \right.\] and the Chevalley-Eilenberg complex of $L$ is a resolution of $A$.


Finally, the differential is decomposable, since
$
d_C(L_n^*)\subseteq \sum_{i=1}^{n-2} L_i^* \wedge L_{n-i}^*.
$ 
\end{proof}
\begin{rem}\label{eulerpoincare}
It follows that the Hilbert series of $A$ may also be calculated as the Euler characteristic of the Chevalley-Eilenberg complex with homological grading, and we recover \eqref{gauss}.
\end{rem}



We will now give a proof of the following theorem, based on a more general result of Avramov.
\begin{thm}\label{thm:syz}
The algebra of syzygies of $A$ is isomorphic to $H^*(L_{\geq2},\C)$ and the Berkovits homology is isomorphic to $H^*(L_{\geq3},\C)$.
\end{thm}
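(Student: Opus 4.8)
The plan is to prove both isomorphisms by comparing semi-free extensions with the Chevalley--Eilenberg minimal model furnished by Proposition~\ref{H0L}, using the graded/homological decomposition of $\mathrm{CE}(L)$ exhibited in the diagonal diagram. Since $A$ is Koszul, Proposition~\ref{H0L} tells us that $\mathrm{CE}(L)$, with $W_k = L_{k+1}^*$, is a minimal model of $A$ over $S(V)$: it resolves $A$ and has decomposable differential with $d_C(L_n^*)\subseteq\sum_{i=1}^{n-2}L_i^*\wedge L_{n-i}^*$. The key observation is that the three complexes in play --- the Koszul homology complex $A[W_1]$, the Berkovits complex $A[W_1\oplus W_2]$, and the various truncations $\mathrm{CE}(L_{\geq k})$ --- are all semi-free extensions whose generators correspond to graded pieces $L_j^*$, and whose homology can be read off by killing off the appropriate lowest-degree parts of $L$.

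First I would establish the syzygy statement. The algebra of syzygies is $\mathrm{Tor}^{S(V)}(A,\C)$, computed by $A[W_1]=A\otimes\bigwedge W_1$. I would form the tensor product of the minimal model $\mathrm{CE}(L)=S(V)[W]$ with $\C$ over $S(V)$: by minimality the differential inherited from $d_C$ vanishes after this base change wherever it lands in $S(V)_+\cdot(\,\cdot\,)$, but the \emph{non}-decomposable part of the bracket dual --- precisely the part dual to $[L_1,L_{n-1}]\to L_n$, i.e.\ the action of $V=L_1^*$ --- survives and becomes the Koszul differential. Concretely, $A\otimes_{S(V)}\mathrm{CE}(L)$ computes $\mathrm{Tor}^{S(V)}(A,\C)=A[W_1]$-homology on one side; on the other, stripping the $L_1^*$ generators from $\mathrm{CE}(L)$ leaves exactly $\mathrm{CE}(L_{\geq2})$, whose cohomology is $H^*(L_{\geq2},\C)$. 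Matching the gradings (the order of a syzygy with the homological degree $p-1$ shift on $L_p^*$) gives the first isomorphism.

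For the Berkovits homology I would argue analogously, but now quotient out the generators corresponding to both $L_1^*$ and $L_2^*$. By Lemma~\ref{lem:syzlowest} the classes $\tilde\Gamma_k$ are precisely the lowest-order syzygies, and these span the dual $L_2^*$ appearing as $W_1$-level relations in $\mathrm{CE}(L)$; adjoining the degree-two generators $y_k$ with $\partial y_k=\tilde\Gamma_k$ in the Berkovits complex $A[W_1\oplus W_2]$ is exactly the operation of killing the $L_2^*$-homology on top of the $L_1^*$-differential. Thus $A[W_1\oplus W_2]$ should be identified, after the base-change $-\otimes_{S(V)}\mathrm{CE}(L)$ and a degeneration argument, with the truncated Chevalley--Eilenberg complex $\mathrm{CE}(L_{\geq3})$, giving $H^*(L_{\geq3},\C)$. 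The cleanest route is to observe that $\mathrm{CE}(L)\cong\mathrm{CE}(L_{\geq k})\otimes\mathrm{CE}(L_{<k})$ as bigraded vector spaces and to run a spectral-sequence or filtration comparison, noting that $\mathrm{CE}(L_{<2})=S(V)$ and $\mathrm{CE}(L_{<3})=A[W_1]$-type pieces collapse correctly because $A$ is Koszul.

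The main obstacle I expect is the bookkeeping of the two gradings and of the signs/decomposability in the differential: one must check that after base change the differential on $A\otimes_{S(V)}\mathrm{CE}(L)$ is \emph{exactly} the Koszul resp.\ Berkovits differential of Definitions~\ref{defn:koshom} and~\ref{defn2:ber}, with no surviving higher terms, and that the surviving complex is genuinely $\mathrm{CE}(L_{\geq k})$ rather than merely having the same dimensions. This is where Koszulness is essential --- it forces $H_i(\mathrm{CE}(L))$ to be concentrated in $i=0$ and pins the total degree to equal the cohomological degree --- so the comparison maps are quasi-isomorphisms rather than mere numerical coincidences. Since the statement is asserted to follow from a more general result of Avramov, the honest proof will most likely invoke his deviation/minimal-model machinery to supply this degeneration, and my contribution would be to make the identification of the truncation $\mathrm{CE}(L_{\geq3})$ with the explicit Berkovits complex precise.
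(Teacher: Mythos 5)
Your argument for the \emph{first} isomorphism is correct in outline and genuinely different from the paper's: since Proposition~\ref{H0L} makes $\mathrm{CE}(L)=S(V)[W']$ a semi-free resolution of $A$ over $S(V)$, you can balance Tor, computing $\mathrm{Tor}^{S(V)}(A,\C)$ once as $H_*(A\otimes_{S(V)}K(S(V)))=H_*(A[W_1])$ (the algebra of syzygies) and once from the minimal model. This is more elementary and self-contained than the paper's route, which derives both statements from a single comparison theorem of Avramov. But two slips need fixing: the second base change must be against $\C$, not $A$ (as written, $A\otimes_{S(V)}\mathrm{CE}(L)$ computes $\mathrm{Tor}^{S(V)}(A,A)$), and under $\C\otimes_{S(V)}-$ the component of $d_C$ dual to $[L_1,L_{n-1}]\to L_n$ \emph{dies}, since it lies in $S(V)_+\cdot S_\bullet(W')$; what survives is exactly the Chevalley--Eilenberg differential of $L_{\geq2}$, so that $\C\otimes_{S(V)}\mathrm{CE}(L)=\mathrm{CE}(L_{\geq2})$ as complexes. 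The Koszul differential appears on the other side of the balance, not here; your claim that the $[L_1,L_{n-1}]$-part ``survives and becomes the Koszul differential'' is backwards.

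The \emph{second} isomorphism is where the genuine gap lies. The Berkovits complex $A[W_1\oplus W_2]$ is not a derived tensor product of two $S(V)$-modules, so no Tor balance is available; it is a further semi-free extension of the DG algebra $A[W_1]$. Adjoining the $y_k$ with $\partial y_k=\tilde\Gamma_k$ kills those classes but in general \emph{creates} new homology (this is precisely why the acyclic closure never terminates and the deviations $\varepsilon_s(A)$ persist for all $s$), so your assertion that this ``is exactly the operation of killing the $L_2^*$-homology'' and hence yields $\mathrm{CE}(L_{\geq3})$ is not an argument --- it is a restatement of the theorem. Your proposed fallback, a spectral-sequence comparison based on $\mathrm{CE}(L)\cong\mathrm{CE}(L_{\geq k})\otimes\mathrm{CE}(L_{<k})$, starts from an isomorphism of bigraded vector spaces only (the differential mixes the two factors), and you specify neither the filtration nor any reason for the required degeneration. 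The paper closes this gap with two ingredients your proposal never introduces: (i) the notion of \emph{acyclic closure}, together with the observation (resting on Lemma~\ref{lem:syzlowest}) that $A[W_1]$ and $A[W_1\oplus W_2]$ are exactly its first two truncations $A[W_{\leq1}]$ and $A[W_{\leq2}]$; and (ii) Avramov's theorem \cite[Theorem 7.2.6]{Avr}, which gives quasi-isomorphisms $A[W_{\leq n}]\sim S(V)[W']/(W'_{<n})$ whenever $A$ has acyclic closure $A[W]$ and minimal model $S(V)[W']$. Granting Proposition~\ref{H0L}, the quotient on the right is $\mathrm{CE}(L_{\geq n+1})$ on the nose, and the cases $n=1,2$ give both halves of the theorem at once. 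So the step you defer to ``Avramov's machinery'' is not a technical degeneration lemma --- it is the entire content of the Berkovits half --- whereas the part you reserve as your own contribution, identifying the minimal-model quotient with $\mathrm{CE}(L_{\geq3})$, is the one-line step.
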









Recall first the following definition from  \cite[Section 6.3]{Avr}.

\begin{defn}
The \emph{acyclic closure} of 
a commutative DG algebra $A$ is a semi-free extension $A \to A[W]$ such that 
$A[W]$ is a resolution of $\C$ and $\{\partial w\;|\; w \in W_{n+1}\}$ minimally generates the reduced homology $\widetilde H_n(A[W_{\leq n}])$ for each $n\geq0$.
\end{defn} 
Thus the complexes calculating the Koszul and Berkovits homology are simply the first two steps of the acyclic closure of $A$. The following result of Avramov may be used to relate a general stage of the acyclic closure to the minimal model of $A$.

Recall that two DG algebras $X$ and $X'$ are quasi-isomorphic if there is a sequence of  quasi-isomorphisms of DG algebras $X \sim X^1 \sim \dots \sim X^n \sim X'$ pointing in either direction.


%


\begin{thm}[{\protect\cite[Theorem 7.2.6]{Avr}}]
Suppose $A$ is a commutative DG algebra with acyclic closure
$A[W]$ and minimal model $S(V)[W']$. Then, for each $n\geq 1$, the DG algebras $A[W_{\leq n}]$ and $S(V)[W']/(W'_{< n})$ are quasi-isomorphic.
\end{thm}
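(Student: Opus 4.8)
The plan is to argue by induction on $n$, constructing at each stage a comparison morphism between the two semi-free DG algebras and reducing the assertion to an isomorphism of homotopy Lie algebras in the appropriate range of degrees. Both $A[W]$ and $S(V)[W']$ are semi-free extensions, so the standard lifting property of such extensions along a surjective quasi-isomorphism produces comparison morphisms in either direction; the substance of the proof is to show that a suitable one of these becomes a quasi-isomorphism after passing to the indicated truncations. Throughout I write $\pi(A)$ for the homotopy Lie algebra of $A$, whose graded dimensions are the deviations $\varepsilon_s(A)$; in the Koszul case $\pi(A)=L$.

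First I would isolate the invariant shared by the two constructions. Since $S(V)$ is regular its acyclic closure is the ordinary Koszul complex $S(V)[U_1]$, with $U_1$ in homological degree one and $\partial U_1=V$, so that $\pi^{\ge 2}(S(V))=0$. Feeding the surjection $S(V)\to A$ into the change-of-rings (Jacobi--Zariski) exact sequence for $\C\to S(V)\to A$ then identifies the absolute homotopy Lie algebra $\pi^{\ge 2}(A)$ with the relative one $\pi^{\ge 2}(A\,|\,S(V))$, and matches the degree-one parts through $\pi^1(S(V))$. The acyclic closure realises all of $\pi(A)$, with $W_i\cong(\pi^i(A))^*$, whereas the minimal model realises only the relative homotopy, with $W'_i\cong(\pi^{i+1}(A\,|\,S(V)))^*$. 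Comparing the two, the generating spaces agree up to a single suspension in homological degrees $\ge 2$, and the point of the theorem is that truncating the acyclic closure at $W_{\le n}$ and passing to $S(V)[W']/(W'_{<n})$ leave exactly the same part of $\pi(A)$ uncancelled, namely the homotopy in degrees exceeding $n$.

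The inductive step promotes this numerical matching to a quasi-isomorphism of DG algebras. Assuming the result for $n-1$, I would lift the comparison to the $n$-th stage using the semi-free structure: the generators of $W_n$ are adjoined precisely to kill a minimal generating set of $\widetilde H_{n-1}(A[W_{\le n-1}])$, and by the inductive hypothesis this reduced homology is computed by $S(V)[W']/(W'_{<n-1})$, where the corresponding classes are represented by the images of $W'_{n-1}$. Because the differential of a minimal model is decomposable, the induced map on indecomposables is rigid, and the only obstruction to extending the comparison one degree further lies in a homotopy-Lie-algebra group that vanishes by the degree matching of the previous paragraph. The standard criterion---that a morphism of semi-free DG algebras inducing an isomorphism on homology in degrees $<n$ and on the space of indecomposables in degree $n$ is a quasi-isomorphism through that range---then closes the induction.

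I expect the principal obstacle to be \emph{multiplicativity together with the acyclicity of the auxiliary directions}. The comparison must respect products, not merely the underlying complexes, and the acyclic closure carries a divided-power structure on its even generators which the minimal model does not record directly; reconciling these forces one to work with DG $\Gamma$-algebras and to check that the Koszul variables $U_1$ of $S(V)$ contribute only an acyclic tensor factor. Concretely, after filtering both sides by the number of $W$- respectively $W'$-generators, the hard point is the collapse of the resulting spectral sequence: the $S(V)$-linear part of the differential on $S(V)[W']/(W'_{<n})$ must resolve $S(V)$ down to $\C$, so that the surviving classes are exactly those matching $\widetilde H_*(A[W_{\le n}])$. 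Verifying this collapse, and its compatibility with the product structure, is the technical heart of the argument.
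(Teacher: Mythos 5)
First, a caveat: the paper does not prove this statement at all; it is imported verbatim from \cite[Theorem 7.2.6]{Avr} and immediately specialised (via $W'_i=L^*_{i+1}$) to deduce Theorem \ref{thm:syz}. So your proposal can only be measured against Avramov's own argument, and against that it has a genuine gap at its core: the strategy of constructing a \emph{direct} comparison morphism between $A[W_{\le n}]$ and $S(V)[W']/(W'_{<n})$ and promoting it to a quasi-isomorphism by matching deviations cannot work. Note first that the ideal $(W'_{<n})$ must be read as containing $V$ (this is forced by the paper's identification $S(V)[W']/(W'_{<n})=\mathrm{CE}(L_{\ge n+1})$), so the quotient has $\C$ in homological degree $0$. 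Consequently any DG algebra map $A[W_{\le n}]\to S(V)[W']/(W'_{<n})$ restricts on $A$ to the augmentation. Take $n=1$ and $A$ quadratic: the constant part of a positive-degree cycle of the Koszul complex $A[W_1]$ is itself a cycle, and the differential is injective on $\C\otimes\bigwedge^{\ge1}W_1$ because $A_1=V$; hence every positive-degree cycle has coefficients in $A_+$ (for instance the classes $\tilde\Gamma_k$ of Lemma \ref{lem:syzlowest}), so any such map induces zero on reduced homology. Since both sides have $H_1\ne0$ whenever $I\ne0$, no direct quasi-isomorphism exists in this direction. In the opposite direction a map must be built generator by generator, and the obstruction to extending it over $W'_j$ is a class in $H_{j-1}(A[W_{\le n}])$, which is nonzero for all $j-1\ge n$; equality of graded dimensions of homotopy Lie algebras does not make these obstruction classes vanish, because the quasi-isomorphism type is not determined by the deviations. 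Finally, your closing criterion yields at best an isomorphism on homology ``through that range'', whereas the theorem asserts a genuine quasi-isomorphism, i.e.\ an isomorphism in \emph{all} degrees --- and the paper needs exactly that strength, since for $n=1,2$ it identifies the entire syzygy algebra and the entire Berkovits homology.

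This is why both Avramov and the paper define ``quasi-isomorphic'' as a zigzag. The proof in \cite{Avr} passes through a third object: one builds an acyclic closure of $\C$ over the minimal model itself, obtaining $S(V)[W']\langle Y_{\le n}\rangle$ together with a quasi-isomorphism onto $A[W_{\le n}]$ by base change along the surjective quasi-isomorphism $S(V)[W']\to A$; minimality (decomposability of the model differential) then lets one arrange $\partial Y_{i+1}\equiv$ (a basis of $W'_i$) modulo decomposables, and successive cancellation of the contractible pairs $(V,Y_1),(W'_1,Y_2),\dots,(W'_{n-1},Y_n)$ shows that the projection $S(V)[W']\langle Y_{\le n}\rangle \to S(V)[W']/(W'_{<n})$ is also a quasi-isomorphism. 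Your final paragraph points toward this --- ``the Koszul variables $U_1$ of $S(V)$ contribute only an acyclic tensor factor'' --- but locates the phenomenon in the wrong object: in $S(V)[W']/(W'_{<n})$ the variables $V$ are already dead and nothing there resolves $S(V)$ down to $\C$; that resolution happens only in the intermediary. One further warning: the identification of $W'_i$ with shifted homotopy/deviations, which your degree-matching takes as input, is in Avramov's development a \emph{corollary} of this very theorem, so using it risks circularity; the inputs that are safe to assume are Gulliksen's minimality of acyclic closures and the decomposability of the model differential.
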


If $A$ is Koszul with $A^!=U(L)$ then the Chevalley-Eilenberg complex of $L$ with homological grading is a minimal model $S(V)[W']$ of $A$, where $W_i'=L^*_{i+1}$. Since $S(V)[W']/(W'_{<n})=\mathrm{CE}(L_{\geq n+1})$  the theorem gives a quasi-isomorphism
$$
A[W_{\leq n}]\;\sim\;\mathrm{CE}(L_{\geq n+1}).
$$
Theorem \ref{thm:syz} is just the cases $n=1$ and $2$.



\section{A smaller complex}\label{sec:bv}


Let $A$ be a Koszul commutative algebra, with Koszul dual $A^!$ the universal enveloping algebra of a graded Lie superalgebra $L$. 
We have seen above that the Chevalley-Eilenberg complex of $L$ (with homological grading) is a resolution of $A$, and the Chevalley-Eilenberg complexes of $L_{\geq 2}$ and $L_{\geq 3}$  calculate the algebra of syzygies $R$ and the Berkovits homology respectively.

The Berkovits complex is not a module over $\C[y_1,\dots,y_m]$, which makes its homology difficult to calculate in practice. There is a construction for the case of the orthogonal Grassmannian $OG(5,10)$ alluded to in the introduction which appears in Movshev--Schwarz \cite{MSh}, where they found an alternative complex quasi-isomorphic to the Berkovits complex, which is a module over $\C[y_1,\dots,y_m]$. This gives them a more manageable complex and they are able to calculate its homology directly. 

Inspired by this construction, we sketch an alternative complex for calculating the Berkovits homology for a specific example where the algebra of syzygies can be explicitly described. 

\subsection{The Grassmannians $G(2,N)$}

To that effect consider the Grassmannian as a homogeneous space ${\rm SL}_N(\C)/P$, where $P$ is some minimal complex parabolic subgroup that is not a Borel. In the usual way, the Pl\"ucker embedding of $G(2,N)$ is given by the orbit of the highest weight vector in a highest weight representation of ${\rm SL}_N(\C)$. The corresponding projective coordinate algebra $A_{G(2,N)}$ is quadratic and Koszul. According to Gorodentsev et al.\ in \cite{GKhR} its algebra of syzygies of $A_{G(2,N)}$ is also quadratic and Koszul, but it was pointed out to us by the referee that this result is incorrect. The algebra of syzygies $R_{G(2,N)}$ in fact admits an non-trivial $A_\infty$-algebra structure for $N>5$, see Example 4.5 below.

Gorodentsev et al.\ give a full set of generators and relations for $A_{G(2,N)}$ in terms of semistandard Young tableaux. We recall some definitions regarding such tableaux and state this result here.

\begin{defn}
A partition $\lambda=(\lambda_1,\dots,\lambda_N)$, where $\lambda_1\geq \lambda_2 \geq \dots \geq \lambda_N\geq0$, is represented by a Young diagram with rows of lengths $\lambda_1,\dots,\lambda_N$. The transposed diagram is denoted $\lambda'=(\lambda'_1,\dots,\lambda'_{N'})$.
A partition $\lambda$  is written in Frobenius notation as
$$
(\alpha_1,\dots,\alpha_p|\beta_1,\dots,\beta_p)\qquad\qquad 
\lambda_i=\alpha_i+i,\;\lambda_i^\prime=\beta_i+i.$$
For example, the partitions $(6,4,3,3,1,0)$ and $(6,4,3,3,1,0)^\prime=(5,4,4,2,1,1)$ are depicted
$$
{\tiny\yng(6,4,3,3,1,0)}\qquad\qquad\qquad\qquad
{\tiny\yng(5,4,4,2,1,1)}
$$
and are written $(5,2,0|4,2,1)$ and $(4,2,1|5,2,0)$ in Frobenius notation.

A semistandard Young tableau is a Young diagram in which the boxes are labelled with numbers $1,\dots,N$ weakly increasing across rows and strictly increasing down columns. For instance,
\begin{align}\label{eqn:yng}
  \begin{aligned}
\tiny\young(112455,3346,455,566,6)
  \end{aligned}
\end{align}
The set of all semistandard Young tableaux of shape $(\alpha_1,\dots,\alpha_p|\beta_1,\dots,\beta_p)$ is denoted
$$
\pi_{(\alpha_1,\dots,\alpha_p|\beta_1,\dots,\beta_p)}.
$$
\end{defn}
For each partition $\lambda$ there is
an irreducible ${\rm GL}_N(\C)$-module (and in fact ${\rm SL}_N(\C)$-module), also denoted by $\pi_{(\alpha_1,\dots,\alpha_p|\beta_1,\dots,\beta_p)}$,
obtained as the quotient space
$$\left.\left({\bigwedge}^{\!\lambda_1^\prime}V\otimes\dots\otimes {\bigwedge}^{\!\lambda_{k^\prime}^\prime}V\right)\right/_{^{\text{\small column exchange relations,}}}
$$
where $V\cong\C^N$, the standard module for ${\rm GL}_N(\C)$ or ${\rm SL}_N(\C)$. We do not make these relations explicit, except to note that the semistandard Young tableaux 
in $\pi_{(\alpha_1,\dots,\alpha_p|\beta_1,\dots,\beta_p)}$
define basis vectors in this representation. For example the basis vector for (\ref{eqn:yng}) is
$$
(e_1\wedge e_3 \wedge e_4 \wedge e_5 \wedge e_6) \otimes (e_1\wedge e_3 \wedge e_5 \wedge e_6) \otimes (e_2\wedge e_4 \wedge e_5 \wedge e_6) \otimes (e_4\wedge e_6) \otimes e_5 \otimes e_5,
$$
where $e_1,\dots,e_6$ are a basis of the standard module for ${\rm GL}_6(\C)$ or ${\rm SL}_6(\C)$. Further,
$\pi_{(\lambda_1,\dots,\lambda_N)} \cong \pi_{(\lambda_1-\lambda_N,\dots,\lambda_{N-1}-\lambda_N,\,0) }\otimes \mathrm{det}^{\lambda_N}$.

If $\mf h$ is the standard Cartan subalgebra of diagonal matrices in $\mf{gl}_N={\rm Lie}({\rm GL}_N(\C))$, we let $\varepsilon_1,\dots,\varepsilon_N$ be the standard basis of $\mf h^*$. The highest weight of the irreducible representation corresponding to a partition $\lambda$ is $(\lambda_1-\lambda_N)\varepsilon_1+\dots+(\lambda_{N-1}-\lambda_N)\varepsilon_{N-1}$. We also note that the fundamental weights of ${\rm GL}_N(\C)$ are $\omega_i=\varepsilon_1+\dots+\varepsilon_i$ for $i=1,\dots,N-1$. The highest weight of the representation defined by Young diagrams of shape depicted in (\ref{eqn:yng}) is $6\varepsilon_1+ 4\varepsilon_2+ 3\varepsilon_3+ 3\varepsilon_4+ \varepsilon_5$ or $[2,1,0,2,1]$ written in a basis of fundamental weights.

One introduces the algebra
$$
\mathsf{A}(N)= \bigoplus_{p,q} \mathsf{A}_{p,q}(N)
$$
where
$$
\mathsf{A}_{p,q}(N):=\bigoplus_{\substack{N-2\geq i_1>\dots>i_p\geq 2 \\ i_1+\dots+i_p=q}}\pi_{((i_1-2),\dots,(i_p-2)|(i_1+1),\dots,(i_p+1))}.
$$
This algebra is generated by
$$
\mathsf{A}_{1,r}=\pi_{(r-2|r+1)},
$$
for $2 \leq r \leq N-2$. The multiplication is given by the projection onto the relevant irreducible component in the tensor product representation.
We have the following:
\begin{thm}\label{thm:ghkr}\cite[Theorem 4.4.1]{GKhR} The algebra of syzygies of $G(2,5)$ form a bigraded supercommutative Frobenius Koszul algebra isomorphic to $\mathsf{A}(5)$ with
$$
(R_{G(2,5)})_{-p+q,q}=\mathsf{A}_{p,q}(5).
$$
\end{thm}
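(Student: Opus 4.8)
The plan is to exploit the $\mathrm{SL}_5(\C)$-equivariance of the whole situation. Since $A=A_{G(2,5)}$ is the homogeneous coordinate ring of $\mathrm{SL}_5(\C)/P$, the complex $A[W_1]$ and its homology $R_{G(2,5)}=\mathrm{Tor}^{S(V)}(A,\C)$ carry an $\mathrm{SL}_5(\C)$-action, and the minimal free resolution recalled in Example~\ref{eg:G25} is equivariant. First I would identify each syzygy module as an irreducible representation by reading off its highest weight from the explicit generators: the relations $\tilde\Gamma_1,\dots,\tilde\Gamma_5$ span the Pl\"ucker quadrics, which form the irreducible $\pi_{(0|3)}=\bigwedge^4\C^5$; the first syzygies $\tilde\Lambda_1,\dots,\tilde\Lambda_5$ span the hook module $\pi_{(1|4)}=\pi_{(2,1^4)}$; and the top class $c^*$ spans the trivial module $\pi_{(1,0|4,3)}=\pi_{(2^5)}$. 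Translating Frobenius into ordinary partition notation and comparing dimensions $(1,5,5,1)$ then yields the identifications $(R_{G(2,5)})_{-p+q,q}=\mathsf{A}_{p,q}(5)$ precisely for $(p,q)\in\{(0,0),(1,2),(1,3),(2,5)\}$, with $\mathsf{A}_{p,q}(5)=0$ otherwise.

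Next I would pin down the multiplication. The product on $R_{G(2,5)}$ inherited from the DG algebra $A[W_1]$ (Remark~\ref{rem:syzygy}) is bigraded, supercommutative and $\mathrm{SL}_5(\C)$-equivariant. Bidegree counting against the resolution shows every product of positive-order classes lands in a zero module except $R_{1,2}\otimes R_{2,3}\to R_{3,5}$: indeed $R_{1,2}\cdot R_{1,2}\subseteq R_{2,4}=0$, $R_{2,3}\cdot R_{2,3}\subseteq R_{4,6}=0$, and all longer products vanish for degree reasons, so that $R_+^3=0$. The surviving pairing $\bigwedge^4\C^5\otimes\pi_{(2,1^4)}\to\C$ is, by Schur's lemma and the multiplicity-one appearance of the trivial summand in this tensor product, determined up to a scalar, and it is exactly the pairing defining the multiplication of $\mathsf{A}(5)$ by projection onto the irreducible component. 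This matches $\mathsf{A}(5)$, whose generators sit in $\mathsf{A}_{1,2}$ and $\mathsf{A}_{1,3}$ with single nontrivial product $\mathsf{A}_{1,2}\cdot\mathsf{A}_{1,3}=\mathsf{A}_{2,5}$.

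The Frobenius property, and the nonvanishing of that scalar, come from the arithmetic Gorenstein property of $G(2,5)$ (its canonical sheaf is $\mathcal O(-5)$), equivalently from the self-duality of the resolution of Example~\ref{eg:G25} with its one-dimensional top module $R_{3,5}$. Thus the multiplications $R_{0,0}\otimes R_{3,5}\to R_{3,5}$ and $R_{1,2}\otimes R_{2,3}\to R_{3,5}$ are perfect pairings, which both forces the scalar above to be nonzero and exhibits $R_{G(2,5)}$ as a Frobenius algebra with socle $R_{3,5}$. For Koszulness I would regard $\mathsf{A}(5)$ as a quadratic algebra in the $p$-grading, with $\mathsf{A}_0=\C$, $\mathsf{A}_1=\mathsf{A}_{1,2}\oplus\mathsf{A}_{1,3}$ and $\mathsf{A}_2=\mathsf{A}_{2,5}$, and verify the Koszul condition directly on this short Gorenstein algebra---either by producing a quadratic Gr\"obner (PBW) basis or by checking the numerical identity $H_{\mathsf{A}(5)}(t)\,H_{\mathsf{A}(5)^!}(-t)=1$ with the super sign conventions.

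The main obstacle is exactly the step that separates $N=5$ from the (false) general statement: one must confirm that the associative algebra structure computed above is the whole story, i.e.\ that the $A_\infty$-minimal model of $A[W_1]$ has vanishing higher products in this range, so that $R_{G(2,5)}$ is genuinely the quadratic algebra $\mathsf{A}(5)$. Because $R_+^3=0$ here, the potential $A_\infty$ obstructions collapse, whereas for $N>5$ they survive and the analogous identification fails. Making this collapse precise, together with the super-signed verification of Koszulness, is where the actual work lies.
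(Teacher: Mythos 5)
The first thing to note is that the paper contains no proof of this theorem: it is imported from \cite[Theorem 4.4.1]{GKhR}, restricted to $N=5$ as the only case surviving the error found by the referee, and the paper's supporting evidence is the explicit resolution and multiplication table of Examples \ref{eg:G25} and \ref{eg:G25alg}. Measured against that, the first three steps of your outline are correct: the equivariant identification of the syzygy modules (dimensions $1,5,5,1$, with $\pi_{(0|3)}=\bigwedge^4\C^5$, $\pi_{(1|4)}$ the hook, and $\pi_{(10|43)}$ trivial), the bidegree argument showing the only surviving product of positive-order classes is $R_{1,2}\otimes R_{2,3}\to R_{3,5}$, and the Frobenius property deduced from the arithmetic Gorenstein property of $G(2,5)$ (equivalently, Poincar\'e duality for the Tor algebra of a Gorenstein quotient, visible in the self-dual resolution of Example \ref{eg:G25}) all check out and agree with the paper's Example \ref{eg:G25alg}.

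There are, however, two gaps. The serious one is Koszulness. Of your two proposed routes, the numerical identity $H_{\mathsf{A}(5)}(t)\,H_{\mathsf{A}(5)^!}(-t)=1$ proves nothing: numerical Koszulness is necessary but not sufficient, and there exist non-Koszul quadratic algebras satisfying it (see \cite{PP}); the Gr\"obner/PBW route would work, but you only name it without exhibiting a basis. Since Koszulness of the syzygy algebra is exactly the property that fails for $N=6$, this is the one step that cannot be left as an ``either/or'' remark, so as written the proposal does not establish the theorem. The second flaw is the $A_\infty$ step: the inference ``$R_+^3=0$, hence the higher products collapse'' is invalid, and the paper's own $G(2,6)$ discussion refutes this pattern of reasoning --- there the binary products $\mathsf{A}_{1,2}(6)\cdot\mathsf{A}_{1,2}(6)$ already vanish for bidegree reasons, yet $m_3:\mathsf{A}_{1,2}(6)^{\otimes3}\to\mathsf{A}_{2,6}(6)$ is nontrivial; higher products are secondary operations and are not controlled by vanishing of associative products. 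The correct (and easy) argument for $N=5$ is a count on the operations themselves: $m_n$ has homological degree $n-2$, so on inputs of positive homological order its image lies in order at least $n+(n-2)\geq4$, and $R_{G(2,5)}$ vanishes in all orders $\geq4$; for $N=6$ this fails precisely because $\mathsf{A}_{2,6}(6)=(R_{G(2,6)})_{4,6}$ sits in order $4=1+1+1+1$. With the $A_\infty$ step repaired this way and Koszulness actually verified (for instance by a PBW basis for the ten-generator quadratic algebra, or a short-Gorenstein Koszulness theorem adapted to the supercommutative setting), your outline would become a complete proof.
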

\begin{rem}
 The theorem as stated in \cite{GKhR} claims that the syzygies of $G(2,N)$ form a bigraded supercommutative Frobenius Koszul algebra. We suggest why this is not true for $N=6$. In particular the existence of a non-trivial ternary product implies that it is not Koszul. 
\end{rem}

\begin{eg}\label{eg:G25alg}
We return to Example \ref{eg:G25} and consider the syzygies of $G(2,5)$, in which case
$$
\begin{array}{ccccccccc}
\mathsf{A}(5) & = & \mathsf{A}_{0,0}(5) & \oplus & \mathsf{A}_{1,2}(5) & \oplus & \mathsf{A}_{1,3}(5) & \oplus & \mathsf{A}_{2,5}(5)\\
& = & \pi_{\emptyset} & \oplus & \pi_{(0|3)} & \oplus & \pi_{(1|4)} & \oplus & \pi_{(10|43)}\\
& = & \C & \oplus & [0,0,0,1] & \oplus & [1,0,0,0] &\oplus & \C \\
& = & (R_{G(2,5)})_{0,0} & \oplus & (R_{G(2,5)})_{1,2} & \oplus & (R_{G(2,5)})_{2,3} & \oplus & (R_{G(2,5)})_{3,5},

\end{array}
$$
where $\pi_\emptyset$ denotes the empty semistandard Young tableau and the corresponding generator in $R_{G(2,5)}$ is given by $c$. The full list of other semistandard Young tableaux along with their generators in $R_{G(2,5)}$ are given by
$$\Yvcentermath1
 \pi_{(0|3)}:\;\;\tilde\Gamma_1=\;\tiny{\young(2,3,4,5)} \;\;\tilde\Gamma_2=\;\tiny{\young(1,3,4,5)} \;\;\tilde\Gamma_3=\;\tiny{\young(1,2,4,5)} \;\;\tilde\Gamma_4=\;\tiny{\young(1,2,3,5)} \;\;\tilde\Gamma_5=\;\tiny{\young(1,2,3,4)}
$$ 
$$\Yvcentermath1
\pi_{(1|4)}:\;\;\tilde\Lambda_i=\;\tiny{\young(1i,2,3,4,5)} \qquad\qquad \pi_{(10|43)}:\;\;c^*=\;\tiny{\young(11,22,33,44,55)}
$$
so that $\mathrm{dim}\pi_{(0|3)}=\mathrm{dim}\pi_{(1|4)}=5$ and $\mathrm{dim}\pi_{(10|43)}=1$. Further, the algebra structure is given by $\tilde\Gamma_i \tilde\Lambda_j =\tilde\Lambda_j \tilde\Gamma_i=(-1)^{i+1}\delta_{ij}c^*$, $c$ is the unit of the algebra and all other multiplications are zero.

\end{eg}

\begin{eg}
 We consider the syzygies of $G(2,6)$ given by $\mathsf{A}(6)=\mathsf{A}_0(6)\oplus \mathsf{A}_1(6)\oplus \mathsf{A}_2(6) \oplus \mathsf{A}_3(6)$ where
 
$$
\begin{array}{ccccccc}
\mathsf{A}_1(6) & = & \mathsf{A}_{1,2}(6) & \oplus & \mathsf{A}_{1,3}(6) & \oplus & \mathsf{A}_{1,4}(6)\\
& = & \pi_{(0|3)} & \oplus & \pi_{(1|4)} & \oplus & \pi_{(2|5)}\\
& = & [0,0,0,1,0] & \oplus & [1,0,0,0,1] & \oplus & [2,0,0,0,0] \\
& = & (R_{G(2,6)})_{1,2} & \oplus & (R_{G(2,6)})_{2,3} & \oplus & (R_{G(2,6)})_{3,4},
\end{array}
$$

$$
\begin{array}{ccccccc}
\mathsf{A}_2(6) & = & \mathsf{A}_{2,5}(6) & \oplus & \mathsf{A}_{2,6}(6) & \oplus & \mathsf{A}_{2,7}(6)\\
& = & \pi_{(10|43)} & \oplus & \pi_{(20|53)} & \oplus & \pi_{(21|54)}\\
& = & [0,0,0,0,2] & \oplus & [1,0,0,0,1] & \oplus & [0,1,0,0,0] \\
& = & (R_{G(2,5)})_{3,5} & \oplus & (R_{G(2,6)})_{4,6} & \oplus & (R_{G(2,6)})_{5,7},
\end{array}
$$
and $\mathsf{A}_0(6)=\mathsf{A}_3(6)=\C$. 


We suggest two reasons why this is not a Koszul algebra. The first is that it is not a hook algebra in the sense of \cite[Section 5.1]{GKhR}: it fails to satisfy $x\cdot y = 0$ for all $x,y\in \pi_{T}$ where $T$ is a partition. Indeed by the Littlewood-Richardson rule $\pi_{(1|4)}\otimes \pi_{(1|4)} \supset \pi_{(20|53)}$ is not zero as claimed.

Further the syzygies appear to have a nontrivial $A_\infty$-algebra structure. Certainly one can see there is a ternary product $m_3:\mathsf{A}_{1,2}(6)\otimes \mathsf{A}_{1,2}(6) \otimes \mathsf{A}_{1,2}(6) \rightarrow \mathsf{A}_{2,6}(6)$ directly by multiplying the Schur functions\footnote{The calculation was performed using SAGE.}
$$
{s_{(0|3)}}^3=s_{(10|54)} + 2s_{(20|53)} + 3s_{(21|52)} + s_{(21|43)} + s_{(210|510)} + 2s_{(210|420)} + s_{(210|321)}.
$$
The second factor is precisely $\mathsf{A}_{2,6}(6)$. If the algebra were Koszul then there could be no such nontrivial map of homological degree $1$ and internal degree $0$. 
\end{eg}


\subsection{The Complex}

We now construct the complex that calculates the Berkovits homology for $A_{G(2,5)}$ with $A_{G(2,5)}^!=U(L)$. Initially, the idea was to construct this complex for a general commutative Koszul algebra whose algebra of syzygies is also Koszul. However, as pointed out to us by the referee, this is a rare situation in the real world. We borrow the name `$bv_\mu$' from Movshev--Schwarz. Choose a basis $\{y_1,\dots,y_5\}$ of $L_2$ inside the symmetric algebra $S(L_2)$ and
consider the complex
$$
bv_\mu\;\;=\;\;
S(L_2)\otimes_{\mathsf{A}(5)^!} K(\mathsf{A}(5)^!) \;\;\cong \;\;S(L_2)\otimes_{\mathsf{A}(5)^!} \mathsf{A}(5)^!\otimes_\C \mathsf{A}(5)^*.
$$
Here $L_2$ 
 is concentrated in degree 0, and as usual for the Koszul complex we have $\mathsf{A}(5)^!$ concentrated in degree 0 and
$\mathsf{A}(5)^*$ has grading induced by having its generators concentrated in degree 1. In the more general situation, one would have to replace the Koszul resolution by a suitable resolution of the $A_\infty$-algebra of syzygies $R$, such as the bar resolution.

Consider the degree zero map of quadratic algebras
$$\bigwedge L_2^*\to \mathsf{A}(5),\qquad y_k^*\mapsto \tilde\Gamma_k.$$
The dual map induces a map $\mathsf{A}(5)^!\to S(L_2)$ which gives an action of $\mathsf{A}(5)^!$ on $S(L_2)$ by multiplication. Again, in the more general situation we would have to consider $S(L_2)$ as an $R$-module and this action may be more complicated.

The differential is $1\otimes d$ where $d$ is the Koszul differential of $K(\mathsf{A}(5)^!)$. Thus we may write
$$
bv_\mu=\left( S(L_2)\otimes \mathsf{A}(5)^*
, \;\; \sum_{k=1}^5\; y_k\;\frac\partial{\!\!\partial \tilde\Gamma_k^*}\right).
$$     
In more general cases the differential would have a more complicated form due to the presence of higher multiplications and may contain terms of higher order in $\partial/\partial\tilde\Gamma_k^*$.




Taking into account the natural grading on the symmetric algebra we see that $bv_\mu$ is filtered by the complexes $S^{\geq p}(L_2)\otimes \mathsf{A}(5)^*$
and the associated graded is isomorphic to the bigraded vector space
\begin{equation}\label{grbv}
\mathrm{Gr}(bv_\mu)=
S(L_2)\otimes \mathsf{A}(5)^*.
\end{equation}
\noindent In the more general case there is no guarantee that this associated graded object has no differential.

Choose a basis $\{q_1,\dots,q_m\}$ of $L_2$ inside $U(L_{\geq2})$. Now let 
$$E_\mu=\left(\bigwedge L_2\otimes U(L_{\geq2}), \;\;\sum_{k=1}^{m}\;y_k\;\frac{\partial}{\!\!\partial q_k\!\!}\;\;\right)$$
with the standard augmented algebra structure.  This has a filtration $\bigwedge^{\geq p} L_2\otimes U(L_{\geq2})$ whose associated graded $\mathrm{Gr}(E_\mu)$ is the algebra itself but with zero differential.
The reduced bar complex $B^+E_\mu$ has an induced filtration, and we observe that the  
corresponding associated graded complex is isomorphic to the bar complex of the associated graded, 
\begin{equation}\label{gremu}
\mathrm{Gr}(B^+E_\mu)\;\;=\;\;
B^+ (\mathrm{Gr}(E_\mu))\;\;=\;\;
B^+ \left(\bigwedge L_2\;\otimes \;U(L_{\geq2})\right)\;.\end{equation}

We show that $bv_\mu$ calculates the cohomology of $L_{\geq3}$ by adapting the argument from~\cite[Section 3]{HS} which studies the case where $R^!\cong U(L_{\geq2})$ is the Yang-Mills algebra, and which is also inspired by~\cite{MSh}. 

The idea is to use the complex $B^+E_\mu$ as an intermediary whose homology is the same as $bv_\mu$ and $\mathrm{CE}(L_{\geq3})$:
$$
bv_\mu\stackrel\sim\longrightarrow  B^+E_\mu 
\stackrel\sim\longleftarrow \mathrm{CE}(L_{\geq3}).
$$
The map $p:bv_\mu \to B^+E_\mu$ is constructed as the composition of a map $bv_\mu \to E_\mu$ with the quasi-isomorphism $E_\mu \to B^+E_\mu$. If we map $y_k\mapsto y_k$, $\tilde\Gamma_k\mapsto q_k$ and $\tilde\Lambda_k$ to the corresponding generator in $L_3$ inside $U(L_{\geq2})$, then this map clearly respects the filtrations.
\begin{thm}
Homologies of the complexes $bv_\mu$ and $B^+E_\mu$ coincide.
\end{thm}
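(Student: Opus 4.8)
The plan is to prove that the filtration-preserving map $p\colon bv_\mu\to B^+E_\mu$ defined above is a quasi-isomorphism, by comparing the spectral sequences of the two filtered complexes. Since $p$ respects the filtrations it induces a morphism of the associated spectral sequences, and it will be enough to show that this morphism is an isomorphism on the $E_1$-page: the comparison theorem for spectral sequences of filtered complexes then yields an isomorphism on $E_\infty$, hence on homology. Before invoking that theorem I would check convergence — both filtrations are exhaustive and bounded below, and in each fixed homological-and-internal degree they are finite (the exterior factor $\bigwedge L_2$ is finite-dimensional and $S(L_2)$ is degreewise finite), so the two spectral sequences converge to the respective homologies.

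Next I would identify the $E_1$-pages. On the $bv_\mu$ side the differential $\sum_k y_k\,\partial/\partial\tilde\Gamma_k^*$ strictly raises the filtration degree, so the induced differential $d_0$ vanishes and by \eqref{grbv}
\[
E_1(bv_\mu)\;=\;\mathrm{Gr}(bv_\mu)\;=\;S(L_2)\otimes\mathsf{A}(5)^*.
\]
On the $B^+E_\mu$ side, \eqref{gremu} identifies the $E_0$-complex with the reduced bar complex $B^+\!\left(\bigwedge L_2\otimes U(L_{\geq2})\right)$ carrying its usual bar differential, so $E_1(B^+E_\mu)$ is its homology, namely $\mathrm{Tor}^{\bigwedge L_2\otimes U(L_{\geq2})}(\C,\C)$. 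By the K\"unneth isomorphism for $\mathrm{Tor}$ over a tensor product of augmented algebras this factors as
\[
\mathrm{Tor}^{\bigwedge L_2}(\C,\C)\otimes\mathrm{Tor}^{U(L_{\geq2})}(\C,\C).
\]
The first factor is the polynomial algebra $S(L_2)$, as the exterior algebra on $L_2$ is Koszul dual to it; the second is the Lie homology $H_*(L_{\geq2},\C)$, which by Theorem \ref{thm:syz} is dual to the algebra of syzygies $\mathsf{A}(5)$. Hence $E_1(B^+E_\mu)\cong S(L_2)\otimes\mathsf{A}(5)^*$, matching $E_1(bv_\mu)$.

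It then remains to check that $\mathrm{Gr}(p)$ realizes this identification. As the differential of $bv_\mu$ raises filtration by one, $\mathrm{Gr}(p)$ is a chain map whose image lies in the bar cycles, and it is induced by the assignment $y_k\mapsto y_k$, $\tilde\Gamma_k\mapsto q_k$, with $\tilde\Lambda_k$ going to its image in $L_3\subset U(L_{\geq2})$, specified above. Under the K\"unneth splitting this sends the factor $S(L_2)$ identically onto $\mathrm{Tor}^{\bigwedge L_2}(\C,\C)=S(L_2)$ and the factor $\mathsf{A}(5)^*$ onto $H_*(L_{\geq2},\C)$ through the Frobenius duality of $\mathsf{A}(5)$ and Theorem \ref{thm:syz}, so it is an isomorphism on $E_1$; the comparison theorem then finishes the proof.

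I expect the main obstacle to be this last matching step: one must keep careful track of the homological and internal bigradings together with the Koszul sign conventions to be sure that the polynomial generators $y_k$ and the syzygy classes dual to $\mathsf{A}(5)$ correspond correctly under $\mathrm{Gr}(p)$, and that the K\"unneth isomorphism is compatible with the map actually induced by $p$ rather than merely an abstract isomorphism of the two $E_1$-pages. Establishing the convergence needed for the comparison theorem is routine but should be stated explicitly, since isomorphism on $E_1$ implies isomorphism on homology only once both spectral sequences are known to converge.
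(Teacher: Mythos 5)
Your proposal follows essentially the same route as the paper's proof: both filter $bv_\mu$ and $B^+E_\mu$ as in \eqref{grbv} and \eqref{gremu}, identify the homology of the associated graded objects with $S(L_2)\otimes\mathsf{A}(5)^*$ by combining Theorem \ref{thm:syz}, classical Koszul duality for the symmetric/exterior pair, and the K\"unneth theorem, and then conclude via the comparison theorem for spectral sequences. The only difference is organizational rather than mathematical: the paper constructs an explicit chain-level weak equivalence $\mathrm{Gr}(bv_\mu)\stackrel\sim\to\mathrm{Gr}(B^+E_\mu)$ from chosen homology-class representatives, whereas you compute both $E_1$-pages abstractly (via $\mathrm{Tor}$ and K\"unneth) and check that $\mathrm{Gr}(p)$ realizes the isomorphism --- and your explicit attention to convergence and to the compatibility of the K\"unneth identification with the map $p$ actually tightens points the paper leaves implicit.
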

\begin{proof}
Consider the spectral sequences associated to the filtrations on $bv_\mu$ and $B^+E_\mu$ defined above, 
which converge to their respective homologies and which have zero pages the associated graded objects \eqref{grbv}, \eqref{gremu}.

By Theorem \ref{thm:syz} we can say that there exists a chain map
$$
\mathsf{A}(5)^*\cong \mathsf{A}(5)\stackrel\sim\longrightarrow \mathrm{CE}(L_{\geq 2}) 
\stackrel\sim\longrightarrow B^+(U(L_{\geq 2}))
$$
that induces isomorphism of homology groups, given by some choice of homology classes in the Chevalley--Eilenberg or bar complexes. Using this,
together with classical Koszul duality for symmetric and exterior algebras, and the K\"unneth theorem,
 we can construct a chain map 
$$
S(L_2)\otimes \mathsf{A}(5)^*
\stackrel\sim\longrightarrow 
B^+(\bigwedge L_{2})\otimes B^+(U(L_{\geq 2}))\stackrel\sim\longrightarrow B^+\left(\bigwedge L_{2}\otimes U(L_{\geq 2})\right)
$$
inducing isomorphism in homology. In other words, we have a chain map
inducing isomorphism between the homologies of the associated graded objects above,
$$
\mathrm{Gr}(bv_\mu)\stackrel\sim\longrightarrow \mathrm{Gr}(B^+E_\mu).
$$
This is a weak equivalence between the zero pages of the spectral sequences associated to the filtrations on $bv_\mu$ and $B^+E_\mu$. Hence we have an isomorphism between the first pages. The result follows from the comparison theorem of spectral sequences.
\end{proof}

For the second quasi-isomorphism we adapt the proof in  \cite{HS}.

\begin{prop}
The algebras $E_\mu$ and $U(L_{\geq 3})$ are quasi-isomorphic, or the homologies of $B^+(E_\mu)$ and $\mathrm{CE}(L_{\geq3})$ coincide. The quasi-isomorphism is given by the inclusion $z\in U(L_{\geq3}) \mapsto 1 \otimes z\in E_\mu$.
\end{prop}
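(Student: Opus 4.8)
The plan is to prove the stronger statement that the inclusion $\iota\colon U(L_{\geq3})\to E_\mu$, $z\mapsto 1\otimes z$, is a quasi-isomorphism of augmented DG algebras, and then to deduce the claim about $B^+$ by applying the bar construction. First I would check that $\iota$ is a well-defined chain map whose image consists of cycles: since the differential $\sum_k y_k\,\partial/\partial q_k$ differentiates only with respect to the degree-two generators $q_k$, and an element $z\in U(L_{\geq3})$ involves none of them, we have $d(1\otimes z)=0$. As $U(L_{\geq3})$ carries the zero differential, $\iota$ is a chain map, and it is visibly a map of algebras.

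The core of the argument is the computation $H(E_\mu)\cong U(L_{\geq3})$, for which I would filter $E_\mu=\bigwedge L_2\otimes U(L_{\geq2})$ by the number of factors drawn from $L_2$ in the $U(L_{\geq2})$-tensorand (the PBW filtration restricted to $L_2$). Because all brackets $[L_2,L_{\geq2}]$ land in $L_{\geq4}\subseteq L_{\geq3}$, the correction terms produced when reordering a product into PBW form have strictly smaller count, so the associated graded algebra is $\mathrm{gr}\,U(L_{\geq2})\cong S(L_2)\otimes U(L_{\geq3})$ with the image of $L_2$ central. Viewed as the derivation with $q_k\mapsto y_k$ and $y_k,\,L_{\geq3}\mapsto 0$, the differential lowers the count by exactly one, and the induced differential on $\mathrm{gr}\,E_\mu\cong\bigl(\bigwedge L_2\otimes S(L_2)\bigr)\otimes U(L_{\geq3})$ is precisely the classical Koszul differential $\sum_k y_k\,\partial/\partial q_k$ on the first factor tensored with the identity. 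By Example \ref{eg:SV} the Koszul complex $\bigwedge L_2\otimes S(L_2)$ resolves $\C$, so the K\"unneth theorem gives $H(\mathrm{gr}\,E_\mu)\cong U(L_{\geq3})$, concentrated in count zero. Since this is supported in a single filtration degree there is no room for higher differentials; the filtration is bounded in each internal degree (each graded piece being finite dimensional), so the spectral sequence converges and $H(E_\mu)\cong U(L_{\geq3})$. Finally $\iota$ lands in the count-zero part and represents exactly these surviving classes, so it is the required quasi-isomorphism.

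For the statement about bar complexes I would apply $B^+$ to $\iota$. The bar construction sends a quasi-isomorphism of augmented DG algebras to a quasi-isomorphism (via the comparison theorem for the bar-length filtration), so $B^+\iota\colon B^+U(L_{\geq3})\to B^+E_\mu$ is a quasi-isomorphism; combining this with the standard quasi-isomorphism $\mathrm{CE}(L_{\geq3})\sim B^+U(L_{\geq3})$ between the Chevalley--Eilenberg and bar models of $H_*(L_{\geq3},\C)$ shows that the homologies of $B^+E_\mu$ and $\mathrm{CE}(L_{\geq3})$ coincide. I expect the main obstacle to be the verification that the induced differential on the associated graded is exactly the clean Koszul differential --- that is, confirming that the noncommutativity of $L_2$ inside $U(L_{\geq2})$ contributes only to strictly lower filtration and hence disappears in $\mathrm{gr}$ --- together with checking that $\sum_k y_k\,\partial/\partial q_k$ is a well-defined derivation of $E_\mu$ and that the resulting spectral sequence genuinely converges.
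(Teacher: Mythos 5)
Your proposal is correct and follows essentially the same route as the paper: both arguments filter $E_\mu$ by (in effect) the number of $L_2$-factors in $U(L_{\geq2})$, identify the associated graded as $\bigwedge L_2\otimes S(L_2)\otimes U(L_{\geq3})$ so that the first nonvanishing differential of the spectral sequence is the Koszul-type differential $\sum_k y_k\,\partial/\partial q_k$, and then collapse the sequence onto $U(L_{\geq3})$. The only differences are matters of packaging: the paper imports the filtration and the isomorphism $\mathrm{Gr}_F(U(L_{\geq2}))\cong U(L_{\geq3})\otimes\C[\hat q_1,\dots,\hat q_5]$ from Movshev--Schwarz (Lemmas 28, 29) and Herscovich--Solotar, whereas you re-derive these from PBW and the observation $[L_2,L_{\geq2}]\subseteq L_{\geq4}$, and you spell out the bar-construction step ($B^+$ preserves quasi-isomorphisms, $\mathrm{CE}(L_{\geq3})\sim B^+U(L_{\geq3})$) that the paper leaves implicit.
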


\begin{proof}
We must prove that the cohomology 
of $E_\mu$ is $U(L_{\ge3})$. 
Consider the following filtration of $U(L_{\ge2})$:
$$
F^j:=\left\{\begin{array}{cl}
0 & j=0\\
\left\{z\in U(L_{\ge2})\; | \; \frac{\partial}{\partial q_i}(z)\in F^{j-1}\; \forall i\right\} & j>0
\end{array}\right.
$$
Poincar\'e-Birkhoff-Witt implies that $F^1=U(L_{\ge3})$ (see \cite[Proposition 3.1]{HS} for more details) and by \cite[Lemma 28]{MSh} the filtration is multiplicative, exhaustive, Hausdorff ($\bigcap F^j=0$) and $q_i\in F^2$. 
We define a filtration of $E_\mu$ as in \cite[Proposition 3.7]{HS}:
$$F_pE_{\mu,q}:= F^{p+5-q}\otimes{\bigwedge}^{5-q}L_2.$$
It is clear that $d(F_pE_{\mu,q}) \subseteq F_{p-2}E_{\mu,q-1}$, where $d$ is the differential on the complex $E_\mu$, and so the differentials on the $E^0$ and $E^1$-pages of the spectral sequence associated to this filtration are zero.

Since \cite[Lemma 29]{MSh} is true in this context, we have that $\textrm{Gr}_F(U(L_{\geq2}))\cong U(L_{\geq3})\otimes \C[\hat{q}_1,\dots,\hat{q}_5]$, where $\hat{q}_1,\dots,\hat{q}_5$ are the images of $q_1,\dots,q_5$ in $\textrm{Gr}_F(U(L_{\geq2}))$. The $E^2$-page is given by
$$
E^2_{pq}=F_pE_{\mu,p+q}/F_{p-1}E_{\mu,p+q} \cong 
U(L_{\geq3})\otimes \C[\hat{q}_1,\dots,\hat{q}_5]\otimes {\bigwedge}^{5-p-q} L_2.
$$
The differential on the $E^2$-page is nothing but the Koszul differential for $\bigwedge L_2$ (see \cite[Proposition 3.7]{HS} for more details), which is a resolution of $\C$. Hence, the spectral sequence collapses on the $E^3$-page to $E^3_{0,5}=U(L_{\geq3})$.
\end{proof}
We have the following immediate corollary.

\begin{cor}\label{cor:bv}
The complex $bv_\mu$ calculates the Berkovits homology. Further, we have $H^{2}(L_{\geq3},\C)= 0$ and so $L_{\geq3}$ is free.
\end{cor}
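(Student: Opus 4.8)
The plan is to deduce all three assertions from the two equivalences just proved, together with Theorem~\ref{thm:syz}. The first assertion requires no new work: chaining the quasi-isomorphisms $bv_\mu\stackrel{\sim}{\to}B^+E_\mu\stackrel{\sim}{\leftarrow}\mathrm{CE}(L_{\geq3})$ from the preceding Theorem and Proposition, and then invoking Theorem~\ref{thm:syz} (which identifies $H^*(L_{\geq3},\C)=H(\mathrm{CE}(L_{\geq3}))$ with the Berkovits homology), gives $H(bv_\mu)\cong H^*(L_{\geq3},\C)$, which is the Berkovits homology.

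For the vanishing I would make the small complex completely explicit and read off its top homology. First I would unwind the differential $\sum_k y_k\,\partial/\partial\tilde\Gamma_k^*$: dualising multiplication by $\tilde\Gamma_k$ against the sparse product table of $\mathsf{A}(5)$ from Example~\ref{eg:G25alg} (where $\tilde\Gamma_i\tilde\Lambda_j=(-1)^{i+1}\delta_{ij}c^*$, $c$ is the unit, and all remaining products vanish), the operator $\partial/\partial\tilde\Gamma_k^*$ becomes the contraction sending $(c^*)^\vee\mapsto(-1)^{k+1}\tilde\Lambda_k^\vee$ and $\tilde\Gamma_j^\vee\mapsto\delta_{kj}c^\vee$, and annihilating $\tilde\Lambda_j^\vee$ and $c^\vee$. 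Graded by the homological degree inherited from $\mathsf{A}(5)^*$, the complex $bv_\mu$ is then the three-term complex of free $S(L_2)$-modules
\[
0\to S(L_2)\,(c^*)^\vee\xrightarrow{\,d_2\,}\Big(\bigoplus_k S(L_2)\,\tilde\Lambda_k^\vee\Big)\oplus\Big(\bigoplus_k S(L_2)\,\tilde\Gamma_k^\vee\Big)\xrightarrow{\,d_1\,}S(L_2)\,c^\vee\to0,
\]
with $d_2(s\,(c^*)^\vee)=\sum_k(-1)^{k+1}(y_ks)\,\tilde\Lambda_k^\vee$, $d_1(s\,\tilde\Gamma_k^\vee)=(y_ks)\,c^\vee$ and $d_1(s\,\tilde\Lambda_k^\vee)=0$.

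The punchline is then forced by $S(L_2)=\C[y_1,\dots,y_5]$ being an integral domain: if $\sum_k(-1)^{k+1}(y_ks)\,\tilde\Lambda_k^\vee=0$ then each $y_ks=0$, hence $s=0$, so $d_2$ is injective and the top homology of $bv_\mu$ vanishes. Under the identification of the homological degree of $bv_\mu$ with the cohomological degree of $L_{\geq3}$ (checked at the bottom of the complex, where $H_0(bv_\mu)=S(L_2)/(y_1,\dots,y_5)=\C=H^0(L_{\geq3},\C)$), this is precisely $H^2(L_{\geq3},\C)=0$. Freeness then follows from the standard fact that a positively graded Lie superalgebra with $H^2(-,\C)=0$ has no relations: a minimal generating set dual to $H^1(L_{\geq3},\C)$ yields a surjection from the free Lie superalgebra $\mathbb{L}(V)$ on $V=H_1(L_{\geq3},\C)$ whose kernel $I$ satisfies $I=[\mathbb{L}(V),I]$ by the vanishing of $H_2$, and since the grading is bounded below (by $3$) a graded Nakayama argument forces $I=0$.

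The conceptual content is light; the work and the only real obstacle lie in the bookkeeping of the second paragraph — correctly dualising the multiplication of $\mathsf{A}(5)$ to pin down $\partial/\partial\tilde\Gamma_k^*$, and matching the grading on $bv_\mu$ to the cohomological grading on $L_{\geq3}$ so that the vanishing of the top homology genuinely certifies $H^2(L_{\geq3},\C)=0$. Once the differential is written correctly, the injectivity of $d_2$, and hence the whole Corollary, is immediate.
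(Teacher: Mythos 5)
Your proposal is correct and is essentially the paper's own proof: the first statement is obtained by chaining the two preceding equivalences, the vanishing of $H^2(L_{\geq3},\C)$ is read off from the explicit differential of $bv_\mu$ (your dualisation of the product table of $\mathsf{A}(5)$ reproduces exactly the paper's formulas $d(\tilde\Gamma_i)=y_ic$, $d(\tilde\Lambda_i)=0$, $d(c^*)=\sum_{i=1}^5(-1)^{i+1}y_i\tilde\Lambda_i$), and freeness follows from the criterion that a positively graded Lie superalgebra is free if and only if $H^2(-,\C)=0$, which the paper cites from \cite[Ex II.2.9]{BourLie} and you instead prove via Hopf's formula and a graded Nakayama argument. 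If anything you are slightly more careful than the paper, which only remarks that $c^*$ itself is not a cycle, whereas you verify that no element $s\,(c^*)^\vee$ with $0\neq s\in S(L_2)$ is a cycle, using that $S(L_2)$ is an integral domain.
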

\begin{proof}
 The first statement is clear.
 For the second statement, we use a result from \cite[Ex II.2.9]{BourLie} that if $L_{\geq3}$ is positively graded then $L_{\geq3}$ is a free Lie (super)algebra if and only if $H^2(L_{\geq3},\C)=0$. The differential in the complex $bv_\mu$ is:
$$
d
(c)=0,\qquad 
d
(\tilde\Gamma_i)=y_ic,\qquad 
d
(\tilde\Lambda_i)=0, \qquad 
d
(c^*)= \sum_{i=1}^5 (-1)^{i+1}y_i\tilde\Lambda_i.
$$
Since the only contribution to $H^{2}(L_{\geq3},\C)$ could be given by $c^*$, and this is not a cycle, we obtain $H^{2}(L_{\geq3},\C)= 0$.
\end{proof}

\subsection{Further generalization}\label{gen-kos}

We presented a simple construction of a complex $bv_\mu$ quasi-isomorphic to the Berkovits homology and to the Lie algebra $L_{\geq 3}$, which made certain calculations very straightforward. Although it is only constructed for $G(2,5)$, we would like to investigate whether the construction may be further extended to cases where the algebra of syzygies has higher structure.

We suspect that one may enrich the homology isomorphism between the complexes $bv_\mu$ and $B^+E_\mu$. Namely, one would like quasi-isomorphisms of differential graded {\em algebras}: between the cobar construction $\Omega bv_\mu$, the algebra $E_\mu$, and the algebra $U(L_{\geq3})$.  For this, however, it would be necessary to define an appropriate coalgebra structure on $bv_\mu$.  In the case that the algebra of syzygies $R$ is quadratic, $bv_\mu$ will will be a `strict' differential graded coalgebra, and other cases it will be an $A_\infty$-coalgebra. An explicit definition of such an $A_\infty$ structure was given in the paper \cite{MSh} of Movshev--Schwarz for the example they considered. 

\end{document}